\renewcommand{\d}{\Delta}
\renewcommand{\P}{\mathbb{P}}
\newcommand{\E}{\mathbb{E}}
\newcommand{\R}{\mathds{R}}
\newcommand{\KLEINO}{{\scriptstyle{\mathcal{O}}}}
\newcommand{\txg}{\tilde X_{g_i}}
\newcommand{\txl}{\tilde X_{l_{i-K}}}
\newcommand{\tyg}{\tilde Y_{\gamma_i}}
\newcommand{\tyl}{\tilde Y_{\lambda_{i-K}}}
\newcommand{\F}{\mathcal{F}}
\newcommand{\ec}{\textcircled{e}}
\newcommand{\nc}{\textcircled{n}}
\newcommand{\mc}{\textcircled{m}}
\newcommand{\mic}{\textcircled{$\nu$}}
\renewcommand{\1}{\mathbbm{1}}
\def\lsim{~\rlap{$<$}{\lower 1.0ex\hbox{$\sim$}}}
\DeclareSymbolFont{largesymbols}{OMX}{yhex}{m}{n}
\DeclareMathAccent{\verywidehat}{\mathord}{largesymbols}{'144}
\long\def\symbolfootnote[#1]#2{\begingroup%
\def\thefootnote{\fnsymbol{footnote}}\footnote[#1]{#2}\endgroup}
\long\def\symbolfootnotetext[#1]#2{\begingroup%
\def\thefootnote{\fnsymbol{footnote}}\footnotetext[#1]{#2}\endgroup}
\def\lsim{\mathrel{\rlap{\lower4pt\hbox{\hskip1pt$\sim$}}
    \raise1pt\hbox{$\le$}}}
\newcommand{\var}{\mathbb{V}\hspace*{-0.05cm}\textnormal{a\hspace*{0.02cm}r}}
\newcommand{\cov}{\mathbb{C}\textnormal{o\hspace*{0.02cm}v}}
\renewcommand{\:}{\mathrel{\mathop{:}}}
\newtheorem{defi}{Definition}
\newtheorem{remark}[defi]{Remark}
\newtheorem{prop}[defi]{Proposition}
\newtheorem{lem}[defi]{Lemma}
\newtheorem{theo}{Theorem}
\newtheorem{cor}[defi]{Corollary}
\newtheorem{annahme}{Assumption}
\renewcommand{\figurename}{Figure}
\begin{document}
\pagestyle{plain}
\title{Efficient Covariance Estimation for Asynchronous Noisy High-Frequency Data}
\begin{titlepage}
\begin{center}
\Large
\textbf{Efficient Covariance Estimation for Asynchronous Noisy High-Frequency Data}\\ \vspace*{1.25cm}
\begin{center}\large  Markus Bibinger \footnote[1]{E-mail: bibinger@math.hu-berlin.de}\\ \vspace*{0.5cm}
Interdisciplinary Center for Scientific Computing\\ of the Ruprecht-Karls-University of Heidelberg\\
and\\Department of Mathematics\\
Humboldt University of Berlin\\
\end{center}
 \large December 2008 \\ \vspace*{3.5cm}
 \normalsize\textbf{Abstract}\end{center}
We focus on estimating the integrated covariance of log-price processes in the presence of market microstructure noise. 
We construct an efficient unbiased estimator for the quadratic covariation of two It\^{o} processes in the case where high-frequency asynchronous discrete returns under market microstructure noise are observed. This estimator is based on synchronization and multi-scale methods and attains the optimal rate of convergence. A Monte Carlo study analyzes the finite sample size characteristics of our estimator. \\ \\
\noindent
\textbf{Key words:} Quadratic covariation estimator, Asynchronous observations, Market microstructure noise, Subsampling, Multi-scale estimator, Optimal rate\\ \\
\noindent
\textbf{MSC classes:} 62F12, 62G05
\end{titlepage}
\setcounter{page}{2}
\section{Introduction\label{sec:1}}
Estimating the quadratic covariation, also called integrated covariance, of asset returns is a central theme in finance. With the availability of high-frequency intraday returns the estimation of daily integrated covariances using high-frequency observations became an issue of great interest. The problems occurring in covariance estimation using high-frequency data are mainly the lack of synchronicity and market microstructure noise.\\
In this article we propose a new estimator for the integrated covariance $\langle \tilde X,\tilde Y\rangle_T$ of two log-price processes over a fixed time horizon $[0,T]$ (usually one trading day) when we observe high-frequent noisy asynchronous data. The problem of asynchronous data without noise was solved by \cite{hy} and there are as well estimators developed in recent literature that solve the problem of noisy but synchronous data (see e.\,g.\,\cite{barndorff8}).\\
We work within the model where the efficient asset processes $\tilde X$ and $\tilde Y$ (without noise) are assumed to be It\^{o} processes
\begin{align*}d\tilde X_t&=\mu_t^X~dt+\sigma_t^X~dB_t^X~,\\
d\tilde Y_t&=\mu_t^Y~dt+\sigma_t^Y~dB_t^Y~,~~~t \in [0,T]\end{align*}
with Brownian motions $B^X$ and $B^Y$ which are correlated with $corr(B^X_t,B^Y_t)=\rho_t$ and continuous, bounded and adapted stochastic processes $\mu_t^X,\mu_t^Y,\sigma_t^X,\sigma_t^Y$. It is well known that for synchronous observations without noise the realized covariance $\sum_{t_{i+1}\le T}\left(X_{t_{i+1}}-X_{t_i}\right)\left(Y_{t_{i+1}}-Y_{t_i}\right)$ is a consistent estimator for $\langle \tilde X,\tilde Y\rangle_T=\int_0^T\rho_t\sigma_t^X\sigma_t^Y~dt$ if $\sup_i{\left(t_{i+1}-t_i\right)}\rightarrow 0$.\\ In the case of high-frequency data, e.\,g.\,tick-by-tick data, the observations of the asset processes are usually not simultaneous and estimation methods are hence based on synchronization of the data by interpolation (e.\,g.\,linear or previous-tick interpolation) before calculating a realized covariance estimator.\\ We use the construction of a synchronized time grid for two processes following the method proposed by \cite{palandri}. The realized covariance calculated with the synchronized observations corresponds to the Hayashi-Yoshida estimator given by the sum of all products of increments with overlapping time intervals. This estimator is proved to be a consistent estimator in the absence of noise (\cite{hy}) but becomes inconsistent when market microstructure effects are relevant. The behaviour of this estimator and the realized covariance depending on the sample frequencies are studied in \cite{voev} and \cite{oomen}.\\
Market micostructure effects and estimators for the integrated volatility under its influence were studied intensively in recent literature. Estimators with an optimal rate of convergence $N^{\nicefrac{1}{4}}$, where $N$ denotes the number of observations, are presented by \cite{zhang} and \cite{bn}. Merging those techniques for asynchronous data and the subsampling approach to high-frequency observations contaminated by market microstructure noise as presented in \cite{zhangmykland} a consistent estimator for asynchronous noisy data can be achieved. By construction of an adequate synchronized time-scale for two asset processes and subsampling an estimator with $N^{\nicefrac{1}{6}}$-rate of convergence can be obtained, where $N$ denotes the number of synchronized observations in this context, which is less than or equal to the minimum of observations of $X$ and $Y$. This result is presented in \cite{palandri}. We use the same methods to rearrange the observations in a synchronized grid and show how an extension of subsampling to a multi-scale approach can afford an estimator with a more efficient rate of convergence $N^{\nicefrac{1}{4}}$, which we prove to be the best attainable rate. For this purpose we prove local asymptotic normality with rate $N^{-\nicefrac{1}{4}}$ for a simplified model and obtain bounds for the asymptotic Fisher information. With the minimax theorem we conclude that $N^{\nicefrac{1}{4}}$ is a lower bound for the rate of convergence even in the synchronous equidistant case. Our estimator hence upgrades the estimator proposed in \cite{palandri} to a rate-optimal consistent estimator for integrated covariances and leads to a suitable implementation of integrated variance and covariance estimation following the multi-scale approach invented by \cite{zhang} and our extension for the covariance case. \cite{barndorff9} present a multivariate realized kernel estimator that, furthermore, guarantees to be positive semi-definite, which is aside from non-synchronicity and noise a third important issue in multivariate considerations. Their estimator has a $N^{\nicefrac{1}{5}}$-rate of convergence.\\ In Section \ref{sec:2} we introduce the model and our basic notation. We present a brief outline and the two main results of this article concerning the asymptotics of our multi-scale estimator for the quadratic covariation and local asymptotic normality.  
%In this article we present a method that results in obtaining a rate-optimal estimator for the integrated covariance and we achieve this by a generalization of Zhangs multi-scale estimator for integrated volatility and synchronization of the data.% \cite{palandri} showed that merging subsampling methods as presented by \cite{zhangmykland} and  a suitable synchronization method leads to a consistent estimator for the integrated covariance with analogous rate $N^{\nicefrac{1}{6}}$ where $N$ denotes the number of synchronized observations in this context which is less than or equal to the minimum of observations of $X$ and $Y$. Applying a multi-scale approach analogous to \cite{zhang} we show that a more efficient estimator with rate $N^{\nicefrac{1}{4}}$ can be achieved. We prove this to be the optimal rate of convergence for integrated covariance estimation.  \\

\section{Model and Main Results\label{sec:2}}
We want to obtain a consistent estimator for the covariation $\langle \tilde X, \tilde Y\rangle_T$ of two It\^{o} processes $\tilde X_t$ and $\tilde Y_t$ over a fixed time interval $[0,T]$, e.\,g.\,one trading day, when we observe discrete asynchronous returns contaminated by market microstructure noise. 
The observations of $X$ will be denoted by 
$$X_{t_0},\ldots ,X_{t_n}\,,~~~~~~0\le t_0<t_1<\ldots<t_n\le T\mbox{, with increments}~ \d X_{t_i}=X_{t_{i}}-X_{t_{i-1}}~,$$
and the observations of another log-price process $Y$ by 
$$Y_{\tau_0},\ldots,Y_{\tau_m}\,,~~~~~~0\le \tau_0<\tau_1<\ldots<\tau_m\le T\mbox{, with increments}~ \d Y_{\tau_j}=Y_{\tau_{j}}-Y_{\tau_{j-1}}~.$$
Synchronous data would mean that $m=n$ and $t_i=\tau_i$ for all $i \in \{0,\ldots,n\}$. We consider the general case where the number of observations may differ and the sets of observation times $\mathcal{O^X}=\{t_0,\ldots,t_n\}$ and $\mathcal{O^Y}=\{\tau_0,\ldots,\tau_m\}$  also contain points $t_i \notin \mathcal{O^Y}$ and $\tau_j \notin \mathcal{O^X}$. Usually the considered time interval $[0,T]$ starts with the first observation which means $t_0=0$ or $\tau_0=0$.
We work within the model imposed by the following two assumptions: 
\begin{annahme}\label{eff}
The observed log-price processes are described by the sums of efficient It\^{o} processes and independent (discrete-time) noise processes $\epsilon^X_{t_{i}}$ and $\epsilon^Y_{\tau_{j}}$:
$$X_{t_i}=\tilde X_{t_i}+\epsilon_{t_{i}}^X~,~~i \in \{0,\ldots,n\}~~~\phantom{.}$$
$$Y_{\tau_j}=\tilde Y_{\tau_j}+\epsilon_{\tau_{j}}^Y~,~~j \in \{0,\ldots,m\}~.$$
On a filtered probability space $(\Omega,\F,(\F_t))$ the efficient processes are defined by
\begin{align*}d\tilde X_t&=\mu_t^{X}dt+\sigma_t^XdB_t^X~, \\
d \tilde Y_{\tau}&=\mu_{\tau}^Y d\tau+\sigma_{\tau}^YdB_{\tau}^Y~,\end{align*}
where $B^X$ and $B^Y$ are two $\F_t$-adapted correlated standard Brownian motions with correlation $\rho_t$. The drift $\mu_t$ and spot volatility $\sigma_t$ for both are $\F_t$-adapted, continuous and bounded stochastic processes.
\end{annahme}
\begin{annahme}\label{e} The errors $\epsilon_{t_i}^X,i \in \{0,\ldots,n\}$ and $\epsilon_{\tau_j}^Y,j\in\{0,\ldots,m\}$ due to market microstructure noise are assumed to be i.i.d.\,processes and independent to each other and the efficient processes. We also assume $$\E\epsilon^X\:=\E\epsilon_{t_i}^X=\E\varepsilon^Y\:=\E\varepsilon_{\tau_j}^Y=0~~~~\forall i,j$$ and $\E\left(\epsilon^X\right)^4,\E\left(\epsilon^Y\right)^4<\infty$ where $\E\left(\epsilon^X\right)^k\:=\E\left[\left(\epsilon_{t_i}^X\right)^k\right]$ and $\E\left(\epsilon^Y\right)^k\:=\E\left[\left(\varepsilon_{\tau_j}^Y\right)^k\right]$ analogously.
\end{annahme}
The variances of the noise processes will be denoted by
$$\eta_X^2\:=\E\left[\left(\epsilon_{t_i}^X\right)^2\right]~~\text{and}~~\eta_Y^2\:=\E\left[\left(\epsilon_{\tau_j}^Y\right)^2\right]~.$$
We want to obtain a consistent rate-optimal estimator for the integrated covariance of the efficient processes $\langle \tilde X,\tilde Y\rangle_T=\int_0^T\rho_t\sigma_t^X\sigma_t^Y~dt$ under asymptotics where $\sup_i\d t_i \rightarrow 0$ and $\sup_j \d \tau_j \rightarrow 0$.\\
Assumptions concerning the observation times are imposed later on after constructing a synchronized joint grid. Evidently, we need a form of regularization criterion to ensure the number of observations for both processes and the length of (usually not equidistant) time intervals $\d t_i=\left(t_i-t_{i-1}\right)$ and $\d \tau_j=\left(\tau_j-\tau_{j-1}\right)$ being of the same order. Recall that for our analysis we regard the conditional law given the observation times. A precise analysis for the case of random trading times (e.\,g.\,event times of counting processes) require some additional concepts that are not the focus of this article although we will use Poisson processes to generate the observation times in Section \ref{sec:7}. The latter could be interesting when tick-by-tick data are considered, where trading times can be modeled as Poisson arrivals. We refer to \cite{zhangeps} for an analysis of this special case. \\ \\
At this point we present an outline and an outlook on the the two main results of this article. 
The article is organized as follows: in Section \ref{sec:3} we describe the method of synchronization and show that the Hayashi-Yoshida estimator becomes inconsistent in the presence of market microstructure noise. In Section \ref{sec:4} we present the subsampling estimator, which is similar to the one proposed by \cite{palandri}, but using a different representation, which will be useful for our further analysis. In Section \ref{sec:5} we develop our new estimator using a multi-scale approach that improves the rate of convergence to $N^{\nicefrac{1}{4}}$ resulting in a more efficient estimation method for the quadratic covariation $\langle \tilde X, \tilde Y\rangle_T$. This is one of our main results and can be summarized in the following Theorem \ref{multitheo}:
\begin{theo}\label{multitheo}
Let Assumtions \ref{eff}, \ref{e} and \ref{grid}, that will be stated in Section \ref{sec:3}, be satisfied.\\
We will prove that the multi-scale estimator $\widehat{\langle \tilde X,\tilde Y\rangle}_T^{(mult)}$, which will be constructed in Sections \ref{sec:3}-\ref{sec:5}, is unbiased and has the following asymptotic property:
\begin{align*}\left(\E\left[\left(\widehat{\langle \tilde X,\tilde Y\rangle}_T^{mult}-\langle \tilde X,\tilde Y\rangle_T\right)^2\right]\right)^{\nicefrac{1}{2}}=\mathcal{O}\left(N^{-\nicefrac{1}{4}}\right)~.\end{align*}
$N$ denotes the number of synchronized observations emanating from the synchronization method that will be presented in the next section.\end{theo}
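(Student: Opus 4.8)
The plan is to use that the estimator will be shown to be (essentially) unbiased, so that the mean square error reduces to its variance, and then to balance the two competing variance contributions through an appropriate choice of the number of scales $M$. Recall from Sections \ref{sec:4}--\ref{sec:5} that the multi-scale estimator has the form
$$\widehat{\langle \tilde X,\tilde Y\rangle}_T^{(mult)}=\sum_{i=1}^{M}\alpha_i\,\widehat{\langle \tilde X,\tilde Y\rangle}_T^{(i)}~,$$
where $\widehat{\langle \tilde X,\tilde Y\rangle}_T^{(i)}$ is the subsampling (Hayashi--Yoshida type) estimator on the synchronized grid at scale $i$, and the weights $\alpha_i=\alpha_i^{(M)}$, of order $\mathcal{O}(M^{-1})$, obey the normalization $\sum_{i=1}^{M}\alpha_i=1$ together with a second linear annihilating constraint, the covariance analogue of the one in \cite{zhang}. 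Throughout I would argue conditionally on the observation design, invoking Assumption \ref{grid} to guarantee that the synchronized spacings are of the common order $T/N$.

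\emph{Unbiasedness.} Taking expectations, I would split every synchronized return into an efficient (martingale) part, a drift part and a noise part. The decisive simplification in the covariance setting is that, by Assumption \ref{e}, $\epsilon^X$ and $\epsilon^Y$ are independent and centred, so every pure noise$\times$noise and every mixed signal$\times$noise term has expectation zero exactly; the noise-induced bias that plagues the variance problem simply does not arise here. By the It\^o isometry for the correlated Brownian motions $B^X,B^Y$, the efficient part of each scale-$i$ estimator has expectation $\E\big[\int_0^T\rho_t\sigma_t^X\sigma_t^Y\,dt\big]$, and $\sum_i\alpha_i=1$ reproduces exactly this value. The only residual is the drift$\times$drift contribution, which is $\mathcal{O}(N^{-1})$ uniformly and hence negligible at the order $N^{-\nicefrac{1}{4}}$.

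\emph{Variance and optimization.} It remains to bound $\var\big(\widehat{\langle \tilde X,\tilde Y\rangle}_T^{(mult)}\big)$. Expanding the square, I would organize the terms into three groups: a pure efficient-price (discretization) part, a pure noise part, and mixed signal$\times$noise parts. For the first, the scale-$i$ realized covariance of the efficient prices has variance of order $i/N$; summing with the weights while accounting for the strong positive correlation between the nested scales (Cauchy--Schwarz on $\sum_i|\alpha_i|\,(i/N)^{1/2}$) gives a contribution of order $M/N$, increasing in $M$. For the second, the scale-$i$ noise part has variance of order $N/i$, but the two linear weight constraints are exactly what annihilate the $\mathcal{O}(N/M)$ and $\mathcal{O}(N/M^2)$ pieces, leaving a contribution of order $N/M^3$, decreasing in $M$; the independence $\epsilon^X\perp\epsilon^Y$ again removes the most dangerous terms. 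The mixed terms are of order $1/M$. Balancing the two competing contributions $M/N$ and $N/M^3$ yields the optimal order $M\asymp N^{\nicefrac{1}{2}}$, at which all three groups are $\mathcal{O}(N^{-\nicefrac{1}{2}})$, whence $\big(\E[(\widehat{\langle \tilde X,\tilde Y\rangle}_T^{(mult)}-\langle \tilde X,\tilde Y\rangle_T)^2]\big)^{\nicefrac{1}{2}}=\mathcal{O}(N^{-\nicefrac{1}{4}})$.

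The hard part will be the noise variance: the noise enters \emph{identically} into all $M$ scales, so the scale-$i$ noise errors are strongly correlated, and on top of this the synchronization by previous-tick interpolation makes consecutive synchronized increments share endpoints and have irregular lengths. Tracking this joint covariance structure finely enough to confirm that the weight constraints really cancel everything down to order $N/M^3$, rather than a larger order that would spoil the rate, is the delicate combinatorial core of the argument; it is precisely here that Assumption \ref{grid} is needed to keep the non-equidistant spacings comparable, so that the computations of the synchronous equidistant case of \cite{zhang} carry over to the asynchronous noisy setting.
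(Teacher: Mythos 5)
Your plan is essentially the paper's own proof: the same decomposition of the variance into efficient-price, mixed, and pure-noise parts, the same two weight constraints $\sum_i\alpha_i=1$ and $\sum_i\alpha_i/i=0$ (the latter annihilating the noise term common to all scales), the same orders $\mathcal{O}(M/N)$, $\mathcal{O}(1/M)$ and $\mathcal{O}(N/M^3)$ for the three groups, and the same balancing choice $M\asymp\sqrt{N}$; you also correctly identify the delicate point (the cross-scale correlation of the noise terms and the shared endpoints produced by synchronization) that the paper handles via the bound $\var(U_i)\le 6N\eta_X^2\eta_Y^2$ and Cauchy--Schwarz across neighbouring scales. The only slip is that the single-scale noise variance is $\mathcal{O}(N/i^2)$ rather than $\mathcal{O}(N/i)$, but this does not affect the structure of the argument or the final rate.
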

In Section \ref{sec:5}, after the construction of our multi-scale estimator, Proposition  \ref{multi} gives a more detailed version of Theorem \ref{multitheo}.\\
In Section \ref{sec:6} we consider a simplified parametric model with constant parameters and equidistantly and synchronously observed data contaminated with market microstructure noise. The key result is the following Theorem \ref{boundtheo}: 
\begin{theo}\label{boundtheo}For a constant correlation coefficient $\rho$ and synchronously, equidistantly observed Brownian Motions with i.\,i.\,d.\,Gaussian noise local asymptotic normality (LAN) holds with $N^{-\nicefrac{1}{4}}$-rate. We conclude that $N^{\nicefrac{1}{4}}$ is a lower bound for the rate of convergence for any sequence of estimators and hence our multi-scale estimator is rate-optimal and asymptotically efficient. \end{theo}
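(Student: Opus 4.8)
The plan is to establish local asymptotic normality (LAN) for the parametric experiment in which the parameter of interest is the integrated covariation $\vartheta=\rho\sigma^X\sigma^Y\,T$ (equivalently the covariance entry of the diffusion matrix), and then to invoke the Hájek--Le Cam minimax theorem to convert the Fisher information bound into a lower bound on the rate. First I would write down the exact likelihood of the experiment. In the simplified model the $N$ synchronous equidistant observations are $X_{t_i}=\tilde X_{t_i}+\epsilon^X_{t_i}$ and $Y_{t_i}=\tilde Y_{t_i}+\epsilon^Y_{t_i}$, where the efficient processes are Brownian motions with constant covariance matrix $\Sigma=\begin{pmatrix}(\sigma^X)^2 & \rho\sigma^X\sigma^Y\\ \rho\sigma^X\sigma^Y & (\sigma^Y)^2\end{pmatrix}$ and the noise is i.i.d.\ Gaussian with variances $\eta_X^2,\eta_Y^2$. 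Because everything is Gaussian, the vector of the $2N$ observed increments is a centered Gaussian vector whose covariance matrix $\Gamma_N(\vartheta)$ has an explicit block-tridiagonal (indeed banded Toeplitz) structure: the signal part contributes a diagonal $\tfrac{T}{N}\Sigma$ to each increment block, and the additive noise contributes an MA(1)-type tridiagonal term coupling neighbouring increments. The log-likelihood is therefore the standard Gaussian form $\ell_N(\vartheta)=-\tfrac12\log\det\Gamma_N(\vartheta)-\tfrac12 Z^{\top}\Gamma_N(\vartheta)^{-1}Z$, and the whole problem reduces to the spectral analysis of $\Gamma_N(\vartheta)$.

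The core computation is the asymptotic Fisher information. I would diagonalize (or approximately diagonalize) the banded Toeplitz matrix $\Gamma_N(\vartheta)$ via a Fourier/spectral-density argument: in the high-frequency-plus-noise regime the relevant eigenvalues interpolate between a signal-dominated regime at low frequencies (eigenvalue $\sim \tfrac{T}{N}\Sigma$) and a noise-dominated regime at high frequencies (eigenvalue $\sim \eta^2$ times a trigonometric factor), with the crossover occurring at frequency scale $N^{1/2}$. Using Szeg\H{o}-type / trace approximations, the Fisher information for $\vartheta$ is
\begin{align*}
I_N(\vartheta)=\tfrac12\,\mathrm{tr}\!\left[\Gamma_N^{-1}(\partial_\vartheta\Gamma_N)\,\Gamma_N^{-1}(\partial_\vartheta\Gamma_N)\right],
\end{align*}
and the claim is that $I_N(\vartheta)\asymp \sqrt{N}$, i.e.\ that the spectral sum is dominated by the $O(\sqrt N)$ frequencies in the crossover band. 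Rescaling the local parameter as $\vartheta=\vartheta_0+u\,N^{-1/4}$ then yields a nondegenerate limiting information for $u$, which is exactly the $N^{-1/4}$-rate. To complete LAN in the Le Cam sense I would establish the quadratic expansion
\begin{align*}
\log\frac{dP^N_{\vartheta_0+uN^{-1/4}}}{dP^N_{\vartheta_0}}=u\,\Delta_N-\tfrac12\,u^2\,\mathcal{I}(\vartheta_0)+o_{P}(1),
\end{align*}
with a central sequence $\Delta_N$ that is asymptotically $\mathcal N(0,\mathcal I(\vartheta_0))$, and verify negligibility of the remainder; for Gaussian shift experiments the remainder is controlled purely through trace bounds on powers of $\Gamma_N^{-1}\partial_\vartheta\Gamma_N$, so the CLT for $\Delta_N$ follows from a standard quadratic-form CLT once the fourth-order trace terms are shown to vanish.

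Once LAN is in hand, the lower bound is essentially automatic: the Hájek--Le Cam minimax theorem (local asymptotic minimax theorem) states that for any sequence of estimators $T_N$ and any bounded symmetric loss,
\begin{align*}
\liminf_{N\to\infty}\ \sup_{\vartheta}\ \E_\vartheta\!\left[\ell\!\left(N^{1/4}(T_N-\vartheta)\right)\right]\ \ge\ \E\!\left[\ell\!\left(\mathcal I(\vartheta_0)^{-1/2}\,Z\right)\right],\quad Z\sim\mathcal N(0,1),
\end{align*}
so no estimator can converge faster than $N^{-1/4}$, and the asymptotic variance of any regular estimator is bounded below by $\mathcal I(\vartheta_0)^{-1}$. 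Comparing this with the rate already achieved by the multi-scale estimator in Theorem \ref{multitheo} shows that it is rate-optimal; matching the constant would additionally show asymptotic efficiency. I expect the main obstacle to be the spectral step: controlling the inverse and the trace of $\Gamma_N^{-1}\partial_\vartheta\Gamma_N$ uniformly in a shrinking neighbourhood of $\vartheta_0$, since the matrix is not exactly Toeplitz at the boundary and the signal/noise crossover must be handled carefully to pin down the exact order $\sqrt N$ of the Fisher information rather than merely bound it. The Gaussian-ness of the simplified model is what makes this tractable, as it lets me avoid any expansion of the likelihood beyond the explicit quadratic form and reduces all the analysis to eigenvalue asymptotics of a banded symmetric matrix.
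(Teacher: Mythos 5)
Your plan is correct in outline and coincides with the paper's strategy at every structural step: write the $2N$ observed increments as a centered Gaussian vector, analyze the spectrum of its block-tridiagonal covariance matrix, show that the Fisher information grows like $\sqrt{N}$ so that the local rate is $N^{-\nicefrac{1}{4}}$, verify the LAN expansion, and invoke the minimax theorem. Where you differ is in the execution of the spectral step. The paper needs no Szeg\H{o}-type trace approximation: because the off-diagonal block of the covariance matrix is $\theta\,\Delta t$ times the identity and the diagonal blocks are exact tridiagonal Toeplitz matrices, the $2N$ eigenvalues are available in closed form, $\lambda^{\pm}_{j,N}(\theta)=\Delta t\,(1\pm\theta)+2\eta^2\left(1-\cos\frac{j\pi}{N+1}\right)$, and --- crucially --- the orthogonal diagonalizer is independent of $\theta$ in the equal-variance case. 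This turns the log-likelihood ratio into a sum of independent terms indexed by eigenvalues, so LAN follows from a ready-made criterion (Jacod--Shiryaev, Thm.\ VIII-3.32) once one checks $\sup_j|\gamma_j^{2N}|\to 0$ and $\sum_j(\gamma_j^{2N})^2\to 2h^2I(\rho)$; the latter sum is evaluated as a Riemann sum converging to an explicit integral, which yields the exact constant $I(\rho)=\frac{1}{8\eta}\left((1+\rho)^{-\nicefrac{3}{2}}+(1-\rho)^{-\nicefrac{3}{2}}\right)$. Your trace/quadratic-form-CLT route would also work, and your heuristic for the order $\sqrt{N}$ is right (though note that both the signal-dominated band $j\lesssim\sqrt{N}$ and the noise-dominated tail contribute at order $\sqrt{N}$, not only the crossover band); it buys generality you do not need here at the cost of the exact constant, and the boundary-Toeplitz difficulty you anticipate does not actually arise, since the increment covariance in this model is exactly Toeplitz in each block. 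For unequal noise variances the paper also forgoes the exact constant and only sandwiches $I(\rho)$ between explicit bounds by bounding the perturbed eigenvalues $\xi^{(i)}_{\pm}$ of the block matrix, which is the same concession your approach would have to make.
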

In Proposition \ref{bound} we also give bounds for the asymptotic Fisher information.\\
Simulation results follow in Section \ref{sec:7}, where we compare the two proposed estimators and see that the multi-scale estimator also performs better in the case of finite sample sizes. We observe that if the influence of market microstructure effects is incisive the multi-scale approach provides a much more efficient estimation compared to the estimator based on subsampling. For the simulations we generated the observation times as arrivals of Poisson processes.

\section{Dealing with asynchronicity: Synchronizing data and covariance estimation for observations without noise\label{sec:3}}
\cite{hy} proved the consistency of their estimator
$$\widehat{\langle \tilde X,\tilde Y\rangle}_T^{(HY)}=\sum_{i=1}^{n}\sum_{j=1}^{m}\d \tilde X_{t_i}\d \tilde Y_{\tau_j}\1_{[\min{(t_{i},\tau_{j})}>\max{(t_{i-1},\tau_{j-1})}]}~,$$
where the product terms include all increments over overlapping time intervals, for the case where we observe the efficient price processes without noise.\\ In the case of high-frequency data contaminated by market microstructure noise, however, the estimator becomes inconsistent and explodes (tends to infinity) for $\min{\left(n,m\right)}\rightarrow \infty$.
We will prove this in Proposition \ref{propdiv}, but we focus first on an alternative useful method to deal with the asynchronicity of the data. This method was presented in \cite{palandri}\,(which he calls pseudo-aggregation).\\ For this purpose we construct a set with one or more than one observation times as elements which we call the joint grid. Those elements will be denoted by $\mathcal{H}^i$ and $\mathcal{G}^i, i \in \{0,\ldots,N\}$, and $(N+1)$ is the number of sets contained in the joint grid. The resulting synchronous realized covariance estimator for the integrated covariance will coincide with the one of \cite{hy} but this approach will be useful for our analysis of noise terms. In particular, this construction will enable us  to deal with the noise contamination by applying subsampling techniques in Section \ref{sec:4}. Based on this construction we get a joint grid with $(N+1)$ sets $\mathcal{H}^i$ and $\mathcal{G}^i$ where $N<\min{(n,m)}$. The last fact indicates heuristically that the efficiency of such techniques of covariance estimation depends on the number of observations available for the lower frequent process. In the ideal case both observation frequencies do not differ too much, i.\,e.\,$n$ and $m$ are of the same order and both assets show similar liquidity. In Assumption \ref{grid} we will give a more precise statement on the conditions imposed on the observation times needed for our analysis.\\
\\ The method of constructing a joint grid for the observations of both processes is described by the following iterative algorithm:
\begin{figure}[h!]\fbox{
\begin{minipage}[c]{\textwidth} 
first step:
\begin{itemize}\item for $t_{0}<\tau_{0}$ and $\tilde w_0\:=\min{(w \in\{1,\ldots,n\}|t_{w-1}<\tau_{0}\le t_w\})}$:
$$\mathcal{H}^0=\{t_{0},\ldots,t_{\tilde w_0}\}~~\mbox{and}~~\mathcal{G}^0=\{\tau_{0}\}$$
$$q_{1}\:= \begin{cases}\tilde w_0+1~~~\mbox{if}~~\tau_{0}=t_{\tilde w_0}\\ \tilde w_0 ~~~~~~~~~~\mbox{if}~~\tau_{0}<t_{\tilde w_0}\end{cases}~~\mbox{and}~~ r_{1}\:=1$$
\item for $t_{0}=\tau_{0}$:
$$\mathcal{H}^0=\{t_{0}\}~~\mbox{and}~~\mathcal{G}^0=\{\tau_{0}\}$$
$$q_{1}\:=1~~~\mbox{and}~~~r_1\:=1$$
\item for $t_{0}>\tau_{0}$ and $\tilde l_0\:=\min{(l \in \{1,\ldots,m\}|\tau_{l-1}<t_{0}\le \tau_l\})}$:
$$\mathcal{H}^0=\{t_{0}\}~~\mbox{and}~~\mathcal{G}^0=\{\tau_{0},\ldots,\tau_{\tilde l_0}\}$$
$$q_{1}\:=1~~\mbox{and}~~r_1\:=\begin{cases}\tilde l_0+1~~~\mbox{if}~~t_{0}=\tau_{\tilde l_0}\\ \tilde l_0 ~~~~~~~~~~\mbox{if}~~t_{0}<\tau_{\tilde l_0}\end{cases}$$
\end{itemize}
i-th step (given $\mathcal{H}^{i-1}$ and $\mathcal{G}^{i-1}$):
\begin{itemize}\item for $t_{q_i}<\tau_{r_i}$ and $\tilde w_i\:=\min{(w \in\{q_i+1,\ldots,n\}|t_{w-1}<\tau_{r_i}\le t_w\})}$:
$$\mathcal{H}^i=\{t_{q_i},\ldots,t_{\tilde w_i}\}~~\mbox{and}~~\mathcal{G}^i=\{\tau_{r_i}\}$$
$$q_{i}\dashrightarrow\begin{cases}q_{i+1}=\tilde w_i+1~~~\mbox{if}~~\tau_{r_i}=t_{\tilde w_i}\\ q_{i+1}=\tilde w_i ~~~~~~~~~~\mbox{if}~~\tau_{r_i}<t_{\tilde w_i}\end{cases}~~\mbox{and}~~ r_{i}\dashrightarrow r_{i+1}=r_i+1$$
\item for $t_{q_i}=\tau_{r_i}$:
$$\mathcal{H}^i=\{t_{q_i}\}~~\mbox{and}~~\mathcal{G}^i=\{\tau_{r_i}\}$$
$$q_{i}\dashrightarrow q_{i+1}=q_i+1~~~\mbox{and}~~~r_i\dashrightarrow r_{i+1}=r_i+1$$
\item for $t_{q_i}>\tau_{r_i}$ and $\tilde l_i\:=\min{(l \in \{r_i+1,\ldots,m\}|\tau_{l-1}<t_{q_i}\le \tau_l\})}$:
$$\mathcal{H}^i=\{t_{q_i}\}~~\mbox{and}~~\mathcal{G}^i=\{\tau_{r_i},\ldots,\tau_{\tilde l}\}$$
$$q_{i}\dashrightarrow q_{i+1}=q_i+1~~\mbox{and}~~r_i\dashrightarrow\begin{cases}r_{i+1}=\tilde l_i+1~~~\mbox{if}~~t_{q_i}=\tau_{\tilde l_i}\\ r_{i+1}=\tilde l_i ~~~~~~~~~~\mbox{if}~~t_{q_i}<\tau_{\tilde l_i}\end{cases}$$
\end{itemize}
\end{minipage}}
\end{figure}
\enlargethispage*{1cm}

\newpage
Let us give an example for the construction of the joint grid. Of course the example is just for illustration and the number of observations $m=6$ and $N=n=5$ is restricted and much smaller than in practice. The example emphasizes some important issues appearing when observations are asynchronous. \\ \\ \\
\begin{figure}[h!]
\textbf{Example}\begin{center}
\includegraphics[width=12cm]{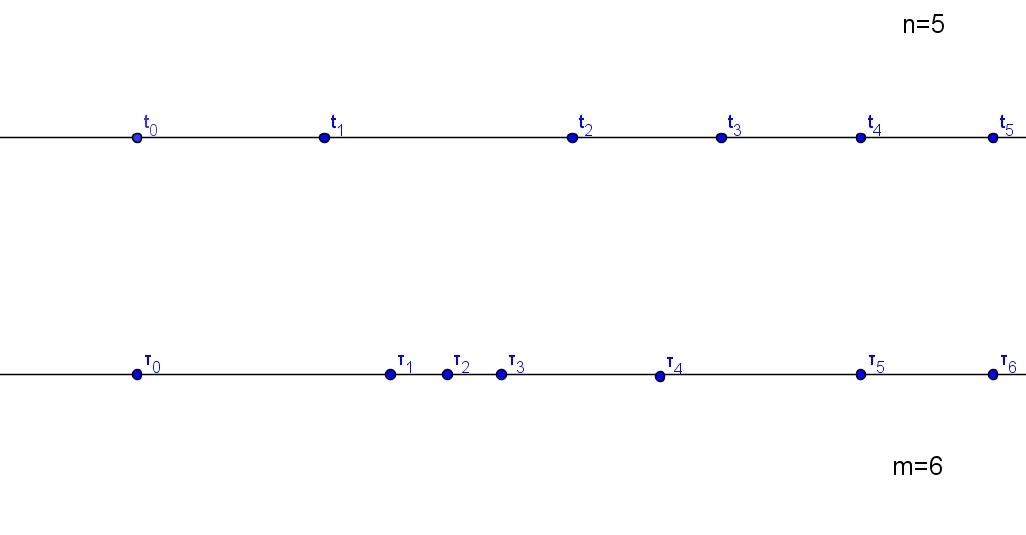}\end{center}
\textit{In the example illustrated above we have}
$\mathcal{H}^0=\{t_0\},\mathcal{G}^0=\{\tau_0\}, 
\mathcal{H}^1=\{t_1,t_2\},\mathcal{G}^1=\{\tau_1\},
\mathcal{H}^2=\{t_2\},\mathcal{G}^2=\{\tau_2,\tau_3,\tau_4\},
\mathcal{H}^3=\{t_3\},\mathcal{G}^3=\{\tau_4,\tau_5\},
\mathcal{H}^4=\{t_4\},\mathcal{G}^4=\{\tau_5\},
\mathcal{H}^5=\{t_5\},\mathcal{G}^5=\{\tau_6\}$~.
\textit{The example shows the important fact that the sets $\mathcal{H}^i$ and $\mathcal{G}^i$ are in general not disjoint and the maxima of consecutive sets can be the same time points. The minimum of a successive set can as well equal the maximum of the prevenient. For further examples see \cite{palandri}.}
\end{figure}\\
Next we pass over from the original observations to the sums of observed \textbf{increments} of the noisy log-prices over sets $\mathcal{H}^i$ and $\mathcal{G}^i$, respectively:
$$X^{\mathcal{H}^{i}}\:=\sum_{{t_j}\in \mathcal{H}^i}\d X_{t_j}~,~~Y^{\mathcal{G}^{i}}\:=\sum_{{\tau_j}\in \mathcal{G}^i}\d Y_{\tau_j}~,~~i \in \{0,\ldots,N\}~.$$
We observe that the realized covariance of the synchronized observations
$$\mathbf{SRC}\:=\sum_{i=0}^{N} X^{\mathcal{H}^{i}} Y^{\mathcal{G}^{i}}=\sum_{i=1}^{n}\sum_{j=1}^{m}\d \tilde X_{t_i}\d \tilde Y_{\tau_j}\1_{[\min{(t_{i},\tau_{j})}>\max{(t_{i-1},\tau_{j-1})}]}$$
is the well-known Hayashi-Yoshida estimator. We use a different representation of this estimator compared to \cite{palandri} using telescoping sums. If we define 
\begin{align*}&\mu_i\:=\max{(k|t_k \in \mathcal{H}^i)},&\tilde\mu_i\:=\max{(k|\tau_k \in \mathcal{G}^i)}&~~\mbox{and}\\ &\nu_i\:=\min{(k|t_k \in \mathcal{H}^i)},&\tilde\nu_i\:=\min{(k|\tau_k \in \mathcal{G}^i)}\,&~~,i\in\{0,\ldots,N\}\end{align*} and for the purpose of a simpler notation
\begin{align*}&X_{g_i}\:=X_{t_{\mu_i}},&Y_{ \gamma_i}\:=Y_{\tau_{\tilde\mu_i}}~~~~\,&,i\in\{0,\ldots,N\}~\mbox{and}\\
&X_{l_i}\,\:=X_{t_{\nu_i-1}},&Y_{\lambda_i}\:=Y_{\tau_{\tilde\nu_i-1}}~~&,i\in\{1,\ldots,N\}\end{align*}
with $l_0\:=t_0,\,\lambda_0\:=\tau_0$ we can write $X^{\mathcal{H}^{i}}$ and $Y^{\mathcal{G}{i}}$ as telescoping sums 
$X^{\mathcal{H}^{i}}=\left(X_{g_i}-X_{l_i}\right)$,
 $Y^{\mathcal{G}{i}}=\left(Y_{\gamma_i}-Y_{\lambda_i}\right)$
which leads to
$$\widehat{\langle \tilde X,\tilde Y\rangle}_T^{(HY)}=\sum_{i=0}^N\left(X_{g_i}-X_{l_i}\right)\left(Y_{\gamma_i}-Y_{\lambda_i}\right)~.$$
In this notation $g_i$ denotes the greatest and $l_i$ the last observation time before the least element of the set $\mathcal{H}^i$ and analogously $\gamma_i$ and $\lambda_i$ of $\mathcal{G}^i$.\\
We are interested in asymptotics when $n,\,m\rightarrow\infty$ being of the same order and the time lags between returns tending to zero and hence impose the following assumption on the observation design:
\begin{annahme}\label{grid}For the time lags between the observations 
$$\delta_N^X\:=\sup_{i\in\{1,\ldots,n\}}{\left(t_i-t_{i-1}\right)}=\mathcal{O}\left(\frac{1}{N}\right)~,$$
$$\delta_N^Y\:=\sup_{j\in\{1,\ldots,m\}}{\left(\tau_j-\tau_{j-1}\right)}=\mathcal{O}\left(\frac{1}{N}\right)$$
holds.\end{annahme}
By this assumption we exclude data where the lengths of the observed time intervals vary vigorously or the number of observations $n$ and $m$ are of different order. The assumption seems not too restrictive for most applications. Similar conditions are often imposed for asymptotic analysis, see e.\,g.\,\cite{zhangeps}. Further results are deduced under asymptotics for $n,m\rightarrow\infty$ and Assumption \ref{grid}. In the following we will use the notation $\E_{\tilde X,\tilde Y}[\,\cdot\,]\:=\E\left[\,\cdot\,|\tilde X,\tilde Y\right]$ for the conditional expectation given the paths of both efficient processes.
\begin{prop}\label{propdiv}Under Assumptions \ref{eff},\ref{e} and \ref{grid} for the synchronized realized covariance estimator 
$$\E_{\tilde X,\tilde Y}\left[\widehat{\langle \tilde X,\tilde Y\rangle}_T^{(HY)}\right]=\langle \tilde X,\tilde Y\rangle_T~~,~~\var_{\tilde X,\tilde Y}\left(\widehat{\langle \tilde X,\tilde Y\rangle}_T^{(HY)}\right)=\mathcal{O}_p\left(N\right)$$
holds.\end{prop}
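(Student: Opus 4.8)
The plan is to substitute the noise decomposition $X=\tilde X+\epsilon^X$, $Y=\tilde Y+\epsilon^Y$ into the telescoping representation $\widehat{\langle \tilde X,\tilde Y\rangle}_T^{(HY)}=\sum_{i=0}^N\left(X_{g_i}-X_{l_i}\right)\left(Y_{\gamma_i}-Y_{\lambda_i}\right)$ and expand the product. Writing $\Delta\tilde X_i:=\tilde X_{g_i}-\tilde X_{l_i}$, $\Delta\tilde Y_i:=\tilde Y_{\gamma_i}-\tilde Y_{\lambda_i}$ for the efficient block increments and $\Delta\epsilon^X_i:=\epsilon^X_{g_i}-\epsilon^X_{l_i}$, $\Delta\epsilon^Y_i:=\epsilon^Y_{\gamma_i}-\epsilon^Y_{\lambda_i}$ for the noise block increments, this yields four groups:
$$\widehat{\langle \tilde X,\tilde Y\rangle}_T^{(HY)}=\underbrace{\sum_{i=0}^N\Delta\tilde X_i\,\Delta\tilde Y_i}_{=:S}+\underbrace{\sum_{i=0}^N\Delta\tilde X_i\,\Delta\epsilon^Y_i}_{=:C_1}+\underbrace{\sum_{i=0}^N\Delta\epsilon^X_i\,\Delta\tilde Y_i}_{=:C_2}+\underbrace{\sum_{i=0}^N\Delta\epsilon^X_i\,\Delta\epsilon^Y_i}_{=:D}.$$
Both assertions then reduce to controlling these four blocks, exploiting that, conditionally on the efficient paths, the only remaining randomness sits in the mutually independent, centred noise sequences.

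For the unbiasedness I would apply $\E_{\tilde X,\tilde Y}[\,\cdot\,]$ termwise. By Assumption \ref{e} each noise sequence is centred, independent of $(\tilde X,\tilde Y)$ and independent of the other, so $\E_{\tilde X,\tilde Y}[C_1]=\sum_i\Delta\tilde X_i\,\E[\Delta\epsilon^Y_i]=0$, likewise $\E_{\tilde X,\tilde Y}[C_2]=0$, and $\E_{\tilde X,\tilde Y}[D]=\sum_i\E[\Delta\epsilon^X_i]\,\E[\Delta\epsilon^Y_i]=0$. It remains to identify the signal term $S$: it is precisely the Hayashi--Yoshida estimator of the efficient prices, which is unbiased for $\langle\tilde X,\tilde Y\rangle_T$ because the overlapping block intervals tile $[0,T]$ and an It\^o-isometry argument as in \cite{hy} recovers $\int_0^T\rho_t\sigma_t^X\sigma_t^Y\,dt$ in expectation, the drift cross-contribution vanishing. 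This gives the first claim.

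For the variance, $S$ is conditionally constant and drops out. Since $C_1,C_2,D$ are pairwise uncorrelated given the paths (any covariance between them factors through the expectation of a single centred, independent noise increment and hence vanishes), one gets $\var_{\tilde X,\tilde Y}(\widehat{\langle \tilde X,\tilde Y\rangle}_T^{(HY)})=\var_{\tilde X,\tilde Y}(C_1)+\var_{\tilde X,\tilde Y}(C_2)+\var_{\tilde X,\tilde Y}(D)$. The decisive point is the order of magnitude of the two kinds of increments: under Assumption \ref{grid} the efficient block increments satisfy $\Delta\tilde X_i,\Delta\tilde Y_i=\mathcal{O}_p(N^{-1/2})$, whereas the noise increments are of order $\mathcal{O}_p(1)$ (differences of independent noise with variance of order $\eta_X^2$, resp.\ $\eta_Y^2$). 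Hence $\var_{\tilde X,\tilde Y}(C_1)=\sum_{i,i'}\Delta\tilde X_i\Delta\tilde X_{i'}\,\Cov(\Delta\epsilon^Y_i,\Delta\epsilon^Y_{i'})=\mathcal{O}_p(1)$, because $\sum_i(\Delta\tilde X_i)^2$ is a bounded realized variance and each noise covariance is $\mathcal{O}(1)$ with only finitely many neighbours contributing; likewise $\var_{\tilde X,\tilde Y}(C_2)=\mathcal{O}_p(1)$. The term $D$ dominates: each of its $N+1$ summands has variance $\E[(\Delta\epsilon^X_i)^2]\E[(\Delta\epsilon^Y_i)^2]=\mathcal{O}(1)$, so the diagonal alone is of order $N$, which is the overall order, yielding $\var_{\tilde X,\tilde Y}(\widehat{\langle \tilde X,\tilde Y\rangle}_T^{(HY)})=\mathcal{O}_p(N)$.

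The main obstacle is the dependence bookkeeping for $D$ (and for the off-diagonal terms of $C_1,C_2$). As the worked example shows, consecutive sets $\mathcal{H}^i,\mathcal{H}^{i+1}$ (resp.\ $\mathcal{G}^i,\mathcal{G}^{i+1}$) may share endpoints, so the blockwise noise increments are \emph{not} independent; one must verify that $\Cov(\Delta\epsilon^X_i\Delta\epsilon^Y_i,\Delta\epsilon^X_{i'}\Delta\epsilon^Y_{i'})$ can be nonzero only when $|i-i'|$ is bounded by a fixed constant, since two blocks share a common noise variable only if they are (nearly) adjacent. This localizes the double sum to $\mathcal{O}(N)$ nonzero entries, each $\mathcal{O}(1)$ by the finite fourth-moment Assumption \ref{e}, pinning the order of $\var(D)$ at $N$ and ruling out any faster blow-up, while Assumption \ref{grid} guarantees that the block count $N$ and the increment orders are matched. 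The same adjacency argument bounds the off-diagonal covariances entering $C_1$ and $C_2$.
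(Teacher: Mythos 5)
Your proof follows essentially the same route as the paper: the same four-term decomposition conditional on the efficient paths, unbiasedness from the centred independent noise, the mixed terms contributing $\mathcal{O}_p(1)$, and the pure noise term supplying the dominant order $N$ from its diagonal. The only cosmetic difference is that the paper shows the off-diagonal covariances of the noise term vanish exactly (using that overlapping consecutive $\mathcal{H}$-sets force disjoint consecutive $\mathcal{G}$-sets), which yields the explicit constant $4N\eta_X^2\eta_Y^2$, whereas you bound them by an adjacency argument; both give $\mathcal{O}_p(N)$.
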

\begin{proof} Unbiasedness follows directly from Assumption \ref{e} and the conditional variance can be simplified to:
\begin{align*}&\var_{\tilde X,\tilde Y}\left(\widehat{\langle \tilde X,\tilde Y\rangle}_T^{(HY)}\right)=\E\left[\sum_{i=0}^N\left(\epsilon_{g_i}^X-\epsilon_{l_i}^X\right)\left(\epsilon_{\gamma_i}^Y-\epsilon_{\lambda_i}^Y\right)\right]^2+\mathcal{O}_p(1)=\\ &~\E\left[\sum_{i=0}^N\left(\epsilon_{g_i}^X-\epsilon_{l_i}^X\right)^2\left(\epsilon_{\gamma_i}^Y-\epsilon_{\lambda_i}^Y\right)^2\right]+\mathcal{O}_p(1)=4N\eta_X^2\eta_Y^2+\mathcal{O}_p(1)=\mathcal{O}_p(N)\,.\end{align*}
The variances of $\sum_i\left(\epsilon_{g_i}^X-\epsilon_{l_i}^X\right)\left(\tilde Y_{\gamma_i}-\tilde Y_{\lambda_i}\right)$ and the second sum including increments of $\tilde X$ and $\epsilon^Y$ lead to the term of order 1 in propability. The mixed terms in the remaining second moment have an expectation equal to zero although consecutive sets $\mathcal{H}^{i}$ and $\mathcal{H}^{i+1}$ (or $\mathcal{G}^{i}$ and $\mathcal{G}^{i+1}$) are not generally disjoint. Nevertheless, our synchronization method was defined such that if the intersection of $\mathcal{H}^{i}$ and $\mathcal{H}^{i+1}$ is non-empty, $\mathcal{G}^{i} \cap \mathcal{G}^{i+1}=\emptyset$ holds. Assumption \ref{e} yields that each summand has expectation $2\eta_X^2\cdot 2\eta_Y^2$.
\end{proof}
We conclude that in the presence of market microstructure frictions the Hayashi-Yoshida estimator cannot yield a consistent estimation of the integrated covariance of the underlying efficient processes.
\newpage

\section{Dealing with noise contamination: the subsample estimator\label{sec:4}}
In this section we show that a subsample approach as presented in \cite{zhangmykland} leads to a consistent integrated covariance estimator with $N^{\nicefrac{1}{6}}$-rate of convergence. The  sets $\mathcal{H}^i$ and $\mathcal{G}^i$ are grouped in $K_N$ subsamples and for each subsample the (lower-frequency) realized covariances are calculated. So the synchronized observations are arranged to $K_N$ subsets of observations and for each subset we can calculate a realized covariance for which the error due to noise is smaller than for the highest-frequency realized covariance because of Proposition \ref{propdiv}. Averaging the realized covariations calculated with lower frequencies leads to the resulting estimator:
\begin{align}\label{subsamplingestimator}\widehat{\langle \tilde X,\tilde Y\rangle}_T^{sub}=\frac{1}{K_N}\sum_{i=K_N}^N\left(X_{g_i}-X_{l_{i-K_N}}\right)\left(Y_{\gamma_i}-Y_{\lambda_{i-K_N}}\right)~.\end{align}
Recall that we use the synchronized data and the joint grid that we presented in Section \ref{sec:3} to calculate the subsampling estimator. $K$ depends on $N$, but we drop the index in the following. The sum over $K$ subsamples and the sum over all increments in each subsample can be simplified to the form of the estimator stated above if we assume a regular allocation to subsamples ($\mathcal{H}^0,\mathcal{H}^{K},\ldots$ in the first subsample, $\mathcal{H}^1,\mathcal{H}^{K+1},\ldots$ in the second, etc.\,). 
We do not consider boundary effects (we focus on asymptotics $K=\KLEINO(N)$ and $K,n\rightarrow \infty$) and hence leave out the weights imposed by \cite{palandri}.
His (realized) covariance estimator is defined as an average of weighted sums of products over overlapping increments also divided in subsamples. If we regard the unions
$$A_{v,w}=\bigcup_{i=(w-1)K+v}^{wK+(v-1)}\mathcal{H}^i~~~\mbox{and}~~~B_{v,w}=\bigcup_{i=(w-1)K+v}^{wK+(v-1)}\mathcal{G}^i$$ of $K$ sets $\mathcal{H}^i$ and $\mathcal{G}^i$, respectively,
and $X_{v,w}=\sum_{t_j\in A_{v,w}}\d X_{t_j}$ and $Y_{v,w}=\sum_{\tau_j\in B_{v,w}}\d Y_{\tau_j}$, respectively,
his `Consistent Realized Covariance' estimator
$$\mathbf{CRC}=\frac{1}{K}\sum_{v=0}^{K-1} w_v\sum_{w=1}^{N/K-1}X_{v,w}Y_{v,w}$$
corresponds to our proposed estimator (except the weights $w_v$). Assuming that the noise processes across assets are independent, this estimator is unbiased and as we will prove in the following has for optimal choice $K=\mathcal{O}(N^{\nicefrac{2}{3}})$ an asymptotic variance of order $N^{-\nicefrac{1}{3}}$. The unbiasedness is the reason why we do not need a bias-correction term in contrast to the realized variance case (two time-scales estimator presented by \cite{zhangmykland}).\\
\cite{palandri} has proved this result for his similar estimator, but it is reasonable for our further analysis concerning the multi-scale estimator in chapter \ref{sec:5} to give a short calculation of the asymptotic variance of the subsample estimator in our illustration. We are only interested in the order of the asymptotic variance and we impose mild assumptions on the grid and time intervals that are inherent in the method of subsampling and the data. In particular, we assume a regular allocation to subsamples as stated above and Assumption \ref{grid} that ensures (together with Assumption \ref{eff})  $\d \tilde X_{t_i}=\mathcal{O}_p(\sqrt{1/N})$, $\d \tilde Y_{\tau_j}=\mathcal{O}_p(\sqrt{1/N})$.\\ %The latter means that we exclude all type of degenerate allocations where the length of the observed time intervals vary vigorously.\\
The total variance can be written as
\begin{multline}\label{split}\var\left(\widehat{\langle \tilde X,\tilde Y\rangle}_T^{sub}\right)=\frac{1}{K^2}\sum_{i=K}^N\sum_{j=K}^N\cov\Big(\left(X_{g_i}-X_{l_{i-K}}\right)\left(Y_{\gamma_i}-Y_{\lambda_{i-K}}\right)\,,\\ ~~~~~~~~~~~~~~~~~~~~~~~~~~~~~~~~~~~~~~~~~~~~~~~~~~~~~~~~~~~~~~~\,\left(X_{g_j}-X_{l_{j-K}}\right)\left(Y_{\gamma_j}-Y_{\lambda_{j-K}}\right)\Big)\\
=\frac{1}{K^2}\sum_{i=K}^N\sum_{j=K}^N\left(\cov\left( \ec_i ,\ec_j\right)+\cov\left( \mc_i ,\mc_j\right)+\cov\left( \mic_i ,\mic_j\right)+\cov\left( \nc_i ,\nc_j\right)\right)\end{multline}
with the four uncorrelated terms:
\begin{align*}\ec_i&=\left(\tilde X_{g_i}-\tilde X_{l_{i-K}}\right)\left(\tilde Y_{\gamma_i}-\tilde Y_{\lambda_{i-K}}\right)~,\\
\mc_i&=\left(\tilde X_{g_i}-\tilde X_{l_{i-K}}\right)\left(\epsilon^Y_{\gamma_i}-\epsilon^Y_{\lambda_{i-K}}\right)~,\\
\mic_i&=\left(\epsilon^X_{g_i}-\epsilon^X_{l_{i-K}}\right)\left(\tilde Y_{\gamma_i}-\tilde Y_{\lambda_{i-K}}\right)~,\\
\nc_i&=\left(\epsilon^X_{g_i}-\epsilon^X_{l_{i-K}}\right)\left(\epsilon^Y_{\gamma_i}-\epsilon^Y_{\lambda_{i-K}}\right)~.
\end{align*}
We will consider the four summands consecutively. \\
We start our analysis of the asymptotic orders of the different summands in the total variance focusing on the sum of covariances $\cov\left(\ec_i\,,\,\ec_j\right)$. This is the variance due to discretization and would be the total variance of a subsampling estimator calculated with observations of the efficient processes without noise. First we deduce the order of increments for the efficient processes from Assumptions \ref{eff} and \ref{grid}.
\begin{lem} If Assumptions \ref{eff} and \ref{grid} hold, we obtain the following asymptotic orders for the efficient processes without microstructure noise:
\begin{subequations}
\begin{align}\label{po1}\E\left[\left(\txg-\txl\right)^2\right]&=\mathcal{O} \left(K/N\right)~, \\
							\label{po2}\E\left[\left(\tyg-\tyl\right)^2\right]&=\mathcal{O} \left(K/N\right)~, \\
							\label{po3}\E\left[\left(\txg-\txl\right)\left(\tyg-\tyl\right)\right] &=\mathcal{O} \left(K/N\right)~, \\ 
							\label{po4}\E\left[\left(\txg-\txl\right)^2\left(\tyg-\tyl\right)^2\right] &=\mathcal{O} \left(K^2/N^2\right)~. \end{align}
							\end{subequations}\end{lem}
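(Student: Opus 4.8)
The plan is to reduce all four estimates to two ingredients: a deterministic bound on the length of the time interval spanned by each aggregated increment, and standard moment estimates for increments of It\^o processes with bounded coefficients. Recall that we argue conditionally on the observation times, so these lengths are fixed numbers controlled by Assumption \ref{grid}.

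First I would control the interval length. By the telescoping representation, $\txg-\txl$ is the increment of $\tilde X$ over $[t_{\nu_{i-K}-1},t_{\mu_i}]$ and $\tyg-\tyl$ the increment of $\tilde Y$ over $[\tau_{\tilde\nu_{i-K}-1},\tau_{\tilde\mu_i}]$. I claim both intervals have a common length order $h_i=\mathcal{O}(K/N)$. This is where the synchronization algorithm enters and is the main obstacle: one must show that the union of $K$ consecutive sets $\mathcal{H}^{i-K+1},\dots,\mathcal{H}^i$ spans a time interval of order $K/N$. The idea is that each grid step advances the right frontier $\max(\max\mathcal{H}^j,\max\mathcal{G}^j)$ by only $\mathcal{O}(\delta_N^X+\delta_N^Y)$: in the case $t_{q_j}<\tau_{r_j}$ the set $\mathcal{H}^j$ is sandwiched between two consecutive $Y$-ticks plus at most one overshooting $X$-tick, so its time length is at most $\d\tau_{r_j}+\delta_N^X$, and symmetrically when $t_{q_j}>\tau_{r_j}$; in particular a burst of many $X$-ticks inside a single set does not enlarge its time length. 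Summing $K$ such widths and invoking Assumption \ref{grid}, which gives $\delta_N^X,\delta_N^Y=\mathcal{O}(1/N)$, yields $h_i=\mathcal{O}(K/N)$.

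Next I would establish the second-moment bounds \eqref{po1} and \eqref{po2}. Writing $\txg-\txl$ as $\int\mu^X\,dt+\int\sigma^X\,dB^X$ over the relevant interval, the drift contributes $\mathcal{O}(h_i^2)$ by boundedness of $\mu^X$, while the It\^o isometry together with boundedness of $\sigma^X$ bounds the martingale part by $\|\sigma^X\|_\infty^2 h_i=\mathcal{O}(h_i)$; since $h_i=\mathcal{O}(K/N)$ dominates $h_i^2$, this gives \eqref{po1}, and \eqref{po2} follows identically for $\tilde Y$. For the cross term \eqref{po3} I would apply the Cauchy--Schwarz inequality, $|\E[(\txg-\txl)(\tyg-\tyl)]|\le(\E[(\txg-\txl)^2])^{1/2}(\E[(\tyg-\tyl)^2])^{1/2}=\mathcal{O}(K/N)$.

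Finally, for \eqref{po4} I would first obtain the fourth-moment estimate $\E[(\txg-\txl)^4]=\mathcal{O}(h_i^2)=\mathcal{O}(K^2/N^2)$: the drift part is $\mathcal{O}(h_i^4)$, and the martingale part is controlled by the Burkholder--Davis--Gundy inequality, $\E[(\int\sigma^X dB^X)^4]\le C\,\E[(\int(\sigma^X)^2dt)^2]\le C\|\sigma^X\|_\infty^4 h_i^2$, with the analogous bound for $\tilde Y$. A further Cauchy--Schwarz step, $\E[(\txg-\txl)^2(\tyg-\tyl)^2]\le(\E[(\txg-\txl)^4])^{1/2}(\E[(\tyg-\tyl)^4])^{1/2}$, then yields \eqref{po4}. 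The only genuinely delicate point is the interval-length claim of the first step; the moment estimates themselves are routine consequences of boundedness, the It\^o isometry, and the Burkholder--Davis--Gundy inequality.
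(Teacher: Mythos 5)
Your proof is correct and follows essentially the same route as the paper: It\^{o} isometry with bounded coefficients for the second moments, Cauchy--Schwarz for the cross term, and Burkholder--Davis--Gundy plus Cauchy--Schwarz for the fourth-moment bound. The only difference is that you spell out why $g_i-l_{i-K}=\mathcal{O}(K/N)$ via the frontier argument on the synchronization algorithm, a step the paper passes over by simply invoking Assumption \ref{grid}; this is a welcome addition rather than a deviation.
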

\enlargethispage*{1cm}
\begin{proof}
By Assumption \ref{eff} the drifts of the efficient processes are bounded and thus
$$\E\left[\left(\txg-\txl\right)^2\right]=\E\left[\left(\int_{l_{i-K}}^{g_i} \sigma_t^X~dB_t^X\right)^2\right]+\KLEINO\left(\frac{K}{N}\right)$$
holds because the squared drift term is of order $K^2/N^2$ and the mixed term is of order $(K/N)^{\nicefrac{3}{2}}$. Therefore, to prove \eqref{po1} it suffices to apply It\^{o} isometry and the mean value theorem using again Assumption \ref{eff} for the spot volatilities:\begin{align*}\E\left[\left(\int_{l_{i-K}}^{g_i} \sigma_t^X~dB_t^X\right)^2\right]&= \E\left[\int_{l_{i-K}}^{g_i}\left(\sigma_t^X\right)^2~dt\right]\\ &=\sigma_{l_{i-K},g_i}^{MVT} \left(g_i-l_{i-K}\right)=\mathcal{O}\left(\frac{K}{n}\right)~.\end{align*}
The asymptotic orders of the time increments are given by Assumption \ref{grid}. The constant $\sigma_{l_{i-K},g_i}^{MVT}$ occuring by application of the mean value theorem is finite because of Assumption \ref{eff}. The proof of \eqref{po2} follows analogously.\\
Using \eqref{po1} and \eqref{po2} we obtain \eqref{po3} by the Cauchy-Schwarz inequality:
\begin{align*}\E\left[\left(\txg-\txl\right)\left(\tyg-\tyl\right)\right]&\le \sqrt{\E\left[\left(\txg-\txl\right)^2\right]}\cdot \sqrt{\E\left[\left(\tyg-\tyl\right)^2\right]}\\ &=\mathcal{O}\left(\frac{K}{n}\right)~.\end{align*}
The fourth moments of the increments can be bounded by the squared quadratic covariation using the Burkholder-Davis-Gundy inequality, so it is adequate to prove \eqref{po4} again using the Cauchy-Schwarz inequality:
\begin{align*}\E\left[\left(\txg-\txl\right)^2\left(\tyg-\tyl\right)^2\right]\le \sqrt{\E\left[\left(\txg-\txl\right)^4\right]}\cdot \sqrt{\E\left[\left(\tyg-\tyl\right)^4\right]}\\
\le \sqrt{c\,\E\left(\langle\tilde X\,,\,\tilde X\rangle_{g_i}-\langle\tilde X\,,\,\tilde X\rangle_{l_{i-k}}\right)^2}\cdot \sqrt{c^*\,\E\left(\langle\tilde Y\,,\,\tilde Y\rangle_{\gamma_i}-\langle\tilde Y\,,\,\tilde Y\rangle_{\lambda_{i-K}}\right)^2}=\mathcal{O}\left(\frac{K^2}{N^2}\right)
\end{align*}
with constants $c$ and $c^{*}$ from the application of the Burkholder-Davis-Gundy inequality. The order $K^2/N^2$ then easily follows e.\,g.\,using again the mean value theorem as above.
\end{proof}

\begin{cor}We obtain for the variance due to discretization, when there is no microstructure noise present, that we denote by $\var_{\eta_X=\eta_Y=0}\left(\,\cdot\,\right)$:
\begin{align*}\var_{\eta_X=\eta_Y=0}\left(\widehat{\langle \tilde X,\tilde Y\rangle}_T^{sub}\right)=\mathcal{O}\left(\frac{K}{N}\right)~.\end{align*}\end{cor}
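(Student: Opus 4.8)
The plan is to use the fact that switching off the noise ($\eta_X=\eta_Y=0$) collapses the subsample estimator to $\widehat{\langle \tilde X,\tilde Y\rangle}_T^{sub}=K^{-1}\sum_{i=K}^N\ec_i$, so that the quantity to be controlled is exactly the first of the four uncorrelated blocks appearing in the decomposition \eqref{split}, namely
$$\var_{\eta_X=\eta_Y=0}\left(\widehat{\langle \tilde X,\tilde Y\rangle}_T^{sub}\right)=\frac{1}{K^2}\sum_{i=K}^N\sum_{j=K}^N\cov\left(\ec_i,\ec_j\right).$$
First I would record that every individual covariance is controlled by the fourth-moment estimate \eqref{po4}: since $\ec_i^2=(\txg-\txl)^2(\tyg-\tyl)^2$, the Cauchy--Schwarz inequality gives $|\cov(\ec_i,\ec_j)|\le(\E[\ec_i^2]\,\E[\ec_j^2])^{\nicefrac12}=\mathcal{O}(K^2/N^2)$. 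Summing this bound naively over all $\mathcal{O}(N^2)$ pairs would only yield $\mathcal{O}(1)$, so the real content of the corollary is that the vast majority of the covariances vanish.

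The structural observation driving the proof is that $\ec_i$ is built from the efficient increments over the windows $(l_{i-K},g_i]$ for $X$ and $(\lambda_{i-K},\gamma_i]$ for $Y$, each of which spans exactly $K$ consecutive sets of the joint grid. Two such windows, for indices $i$ and $j$, are therefore disjoint once $|i-j|\ge K$ (up to a grid-dependent constant absorbing the slight mismatch between the $X$- and $Y$-windows). Conditioning on the volatility, drift and correlation paths, the increments of the It\^o processes over disjoint time intervals are independent -- the correlation between $B^X$ and $B^Y$ acts only within a common time interval -- so $\cov(\ec_i,\ec_j)=0$ for $|i-j|\ge K$. Only the diagonal band $|i-j|<K$ survives, and it contains at most $\mathcal{O}(KN)$ index pairs. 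Combining this count with the per-term bound yields
$$\frac{1}{K^2}\sum_{|i-j|<K}\cov(\ec_i,\ec_j)=\frac{1}{K^2}\cdot\mathcal{O}(KN)\cdot\mathcal{O}\left(\frac{K^2}{N^2}\right)=\mathcal{O}\left(\frac{K}{N}\right),$$
which is the claimed order.

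The main obstacle is the rigorous treatment of the off-band covariances under \emph{stochastic} volatility. Conditioning on the full paths $\tilde X,\tilde Y$ (as done for the noise block in Proposition \ref{propdiv}) would annihilate the $\ec$ terms altogether, so one must instead condition only on the volatility and drift processes: given these, the windowed increments over disjoint supports are genuinely independent, the off-band covariances vanish, and the boundedness of $\sigma^X,\sigma^Y,\rho$ from Assumption \ref{eff} converts the conditional $\mathcal{O}(K^2/N^2)$ bounds on the band into the stated order. Equivalently, this block is the discretization contribution to the estimation error, the irreducible randomness of the target $\langle\tilde X,\tilde Y\rangle_T$ being separated off as a volatility-measurable Riemann-sum remainder. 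A secondary point, for which Assumption \ref{grid} is exactly what is needed, is to confirm that the $X$- and $Y$-windows underlying a single $\ec_i$ do not drift apart along the grid, so that the disjointness threshold stays at $\mathcal{O}(K)$ and the counting constant remains finite.
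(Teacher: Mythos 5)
Your argument is correct and follows essentially the same route as the paper: restrict to the band $|i-j|\le K$ using the uncorrelatedness of increments over non-overlapping windows, bound each surviving covariance by $\mathcal{O}\left(K^2/N^2\right)$ via the moment lemma, and count $\mathcal{O}(NK)$ surviving pairs. The only difference is that the paper telescopes the increments so that each in-band covariance reduces to the variance of the product over the common overlap before invoking \eqref{po4}, whereas you apply Cauchy--Schwarz to the full terms; both give the same $\mathcal{O}(K/N)$ bound.
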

\begin{proof}
\begin{align*}\var_{\eta_X=\eta_Y=0}\left(\widehat{\langle \tilde X,\tilde Y\rangle}_T^{sub}\right)=\frac{1}{K^2}\sum_{i=K}^N\sum_{j=K}^N\cov\left(\ec_i\,,\,\ec_j\right)=~~~~~~~~~~~~~~~~~~~~~~~~~~~~~~~~~~~~~~~~~~~~~~~~~~~~~~~~~\\
=\frac{2}{K^2}\sum_{i=K}^N\sum_{j=K}^{i-1}\cov\left(\left(\tilde X_{g_i}\hspace*{-0.1cm}- \hspace*{-0.1cm}\tilde X_{l_{i-K}}\right)\left(\tilde Y_{\gamma_i}\hspace*{-0.1cm}-\hspace*{-0.1cm}\tilde Y_{\lambda_{i-K}}\right)\,,\,\left(\tilde X_{g_j}\hspace*{-0.1cm}- \hspace*{-0.1cm}\tilde X_{l_{j-K}}\right)\left(\tilde Y_{\gamma_j}\hspace*{-0.1cm}-\hspace*{-0.1cm}\tilde Y_{\lambda_{j-K}}\right)\right)~~~~~\\
+\frac{1}{K^2}\sum_{i=K}^N\var\left(\ec_i\right)\\
=\frac{2}{K^2}\sum_{i=K}^N\sum_{j=K}^{i-1}\1_{\{|i-j|\le K\}}\cov\Big(\left(\tilde X_{g_i}\hspace*{-0.1cm}-\hspace*{-0.1cm}\tilde X_{g_j}+\tilde X_{g_j}\hspace*{-0.1cm}-\hspace*{-0.1cm}\tilde X_{l_{i-K}}\right)\left(\tilde Y_{\gamma_i}\hspace*{-0.1cm}-\hspace*{-0.1cm}\tilde Y_{\gamma_j}\hspace*{-0.1cm}+\hspace*{-0.1cm}\tilde Y_{\gamma_j}\hspace*{-0.1cm}-\hspace*{-0.1cm}\tilde Y_{\lambda_{i-K}}\right)\,,~~\\
\,\left(\tilde X_{g_j}\hspace*{-0.1cm}-\hspace*{-0.1cm}\tilde X_{l_{i-K}}\hspace*{-0.1cm}+\hspace*{-0.1cm}\tilde X_{l_{i-K}}\hspace*{-0.1cm}-\hspace*{-0.1cm}\tilde X_{l_{j-K}}\right)\left(\tilde Y_{\gamma_j}\hspace*{-0.1cm}-\hspace*{-0.1cm}\tilde Y_{\lambda_{i-K}}\hspace*{-0.1cm}+\hspace*{-0.1cm}\tilde Y_{\lambda_{i-K}}\hspace*{-0.1cm}-\hspace*{-0.1cm}\tilde Y_{\lambda_{j-K}}\right)\Big)+\frac{1}{K^2}\sum_{i=K}^N\var\left(\ec_i\right)\\
=\frac{2}{K^2}\sum_{i=K}^N\sum_{j=K}^{i-1}\1_{\{|i-j|\le K\}}\var\left(\left(\tilde X_{g_j}\hspace*{-0.1cm}-\hspace*{-0.1cm}\tilde X_{l_{i-K}}\right)\left(\tilde Y_{\gamma_j}\hspace*{-0.1cm}-\hspace*{-0.1cm}\tilde Y_{\lambda_{i-K}}\right)\right)+\frac{1}{K^2}\sum_{i=K}^N\var\left(\ec_i\right)~~~~~~~\\
=\mathcal{O}\left( \frac{1}{K^2}NK\frac{K^2}{N^2}\right)=\mathcal{O}\left(\frac{K}{N}\right)~~.~~~~~~~~~~~~~~~~~~~~~~~~~~~~~~~~~~~~~~~~~~~~~~~~~~~~~~~~~~~~~~~~~~~~~~~~~~~~~~~~~~~~~~
\end{align*}
In this calculation we also used characteristics of our synchronization method. The increments $\left(\tilde X_{g_i}- \tilde X_{l_{i-K}}\right)$ and $\left(\tilde X_{g_j}- \tilde X_{l_{j-K}}\right)$ are non-overlapping and hence uncorrelated for $|i-j|>K$. Taking the construction procedure of the joint grid into account, increments $\left(\tilde X_{g_i}- \tilde X_{l_{i-K}}\right)$ and $\left(\tilde Y_{\gamma_j}-\tilde Y_{\lambda_{j-K}}\right)$ are uncorrelated as well for $|i-j|>K$.\end{proof}
So, the asymptotic order of the discretization variance is
\begin{align}\label{disvar}\frac{1}{K^2}\sum_{i=K}^N\sum_{j=K}^N\cov\left(\ec_i\,,\,\ec_j\right)=\mathcal{O}\left(\frac{K}{N}\right).\end{align}
For the mixed summands in \eqref{split} we obtain under Assumption \ref{eff}, \ref{e} and \ref{grid}
\begin{align}\notag\frac{1}{K^2}\sum_{i=K}^N\sum_{j=K}^N\cov\left(\mc_i\,,\,\mc_j\right)=\frac{1}{K^2}\sum_{i=K}^N\var\left(\mc_i\right)+\frac{2}{K^2}\sum_{i=K}^{N-1}\cov\left(\mc_i\,,\,\mc_{i+1}\right)\\ \le \frac{3}{K^2}\sum_{i=1}^N\var\left(\mc_i\right)=C_{\mathbf{m}}\frac{\eta_Y^2}{K}=\mathcal{O}\left(\frac{\eta_Y^2}{K}\right)\label{mixvar}~\end{align}\enlargethispage*{1cm}
with a constant $C_{\mathbf{m}}$ and analogously with a constant $C_{\nu}$
\begin{align}\label{mixvar1}\frac{1}{K^2}\sum_{i=K}^N\sum_{j=K}^N\cov\left(\mic_i\,,\,\mic_j\right)\le C_{\nu}\frac{\eta_X^2}{K}=\mathcal{O}\left(\frac{\eta_X^2}{K}\right)~.\end{align}
The fourth (noise) term in \eqref{split} is of order $N/K^2$ because
\begin{align}\notag\frac{1}{K^2}\sum_{i=K}^N\sum_{j=K}^N\cov\left(\nc_i\,,\,\nc_j\right)=\frac{1}{K^2}\sum_{i=K}^N\var\left(\nc_i\right)+\frac{2}{K^2}\sum_{i=K}^{N-1}\cov\left(\nc_i\,,\,\nc_{i+1}\right)\\ \le \frac{3}{K^2}\sum_{i=1}^N\var\left(\nc_i\right)=C_{\mathbf{n}}\frac{\eta_X^2\eta_Y^2 N}{K^2}=\mathcal{O}\left(\frac{\eta_X^2\eta_Y^2N}{K^2}\right)\label{noisevar}\end{align}
with a constant $C_{\mathbf{n}}$ holds.
We used Assumption \ref{e} that the noise is i.\,i.\,d.\,, but recall that $g_i=g_{i+1}$ and $l_i=g_{i-1}$ is possible.
The reason why we do not include moments of the distribution of the noise processes in the constants is that these distributions may depend on the number of observations $N$ although we disclaimed on further indices.\\
For a choice $K=\mathcal{O}(N^{\nicefrac{2}{3}})$ the first and fourth term are of the same order $N^{-\nicefrac{1}{3}}$ and thus we see that the $\widehat{\langle \tilde X,\tilde Y\rangle}_T^{sub}$-estimator is consistent with rate $N^{\nicefrac{1}{6}}$.\\
We summarize the properties of the subsample estimator in the following proposition.
\begin{prop}
If we choose $K_N=\mathcal{O}\left(N^{\nicefrac{2}{3}}\right)$ the subsample estimator 
\begin{align*}\widehat{\langle \tilde X,\tilde Y\rangle}_T^{sub}=\frac{1}{K_N}\sum_{i=K_N}^N\left(X_{g_i}-X_{l_{i-K_N}}\right)\left(Y_{\gamma_i}-Y_{\lambda_{i-K_N}}\right)\end{align*}
is a consistent unbiased estimator with asymptotic variance of order $N^{-\nicefrac{1}{3}}$:
\begin{subequations}
\begin{align}\E\left[\widehat{\langle \tilde X,\tilde Y\rangle}_T^{sub}-\langle \tilde X,\tilde Y\rangle_T\right]=0~,\end{align}
\begin{align}\var\left(\widehat{\langle \tilde X,\tilde Y\rangle}_T^{sub}\right)=\mathcal{O}\left(N^{-\nicefrac{1}{3}}\right)~.\end{align}
Bias-variance decomposition yields that
\begin{align}\E\left(\left[\left(\widehat{\langle \tilde X,\tilde Y\rangle}_T^{sub}-\langle \tilde X,\tilde Y\rangle_T\right)^2\right]\right)^{\nicefrac{1}{2}}=\mathcal{O}\left(N^{-\nicefrac{1}{6}}\right)~.\end{align}\end{subequations}
\end{prop}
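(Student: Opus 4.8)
The plan is to assemble the unbiasedness and the variance bound separately and then to combine them through the bias--variance decomposition; since nearly all of the analytic work has already been carried out in \eqref{disvar}--\eqref{noisevar}, the proof is essentially bookkeeping together with one optimisation in $K$.

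For unbiasedness I would first note that $\widehat{\langle \tilde X,\tilde Y\rangle}_T^{sub}$ is, up to the averaging factor $1/K$, a sum of $K$ Hayashi--Yoshida estimators, one for each subsample produced by the regular allocation of the sets $\mathcal{H}^i,\mathcal{G}^i$. Expanding each summand into the four blocks $\ec_i,\mc_i,\mic_i,\nc_i$ and taking expectations, the three noise-carrying blocks vanish: the mixed blocks $\mc_i,\mic_i$ vanish because the noise is centred and independent of the efficient paths, and $\nc_i$ vanishes because $\epsilon^X$ and $\epsilon^Y$ are independent, both by Assumption \ref{e}. Hence only the $\ec_i$-block survives in expectation, so conditionally on the paths each subsample reduces to the noise-free Hayashi--Yoshida estimator, which is conditionally unbiased by Proposition \ref{propdiv}. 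Averaging $K$ such conditionally unbiased estimators and using the tower property then yields $\E\big[\widehat{\langle \tilde X,\tilde Y\rangle}_T^{sub}-\langle \tilde X,\tilde Y\rangle_T\big]=0$.

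For the variance I would simply collect the four orders already established. Using the decomposition \eqref{split} into the uncorrelated blocks, the discretisation term is of order $K/N$ by \eqref{disvar}, the two mixed terms are of order $1/K$ by \eqref{mixvar} and \eqref{mixvar1}, and the pure-noise term is of order $N/K^2$ by \eqref{noisevar}, so that
\begin{align*}\var\left(\widehat{\langle \tilde X,\tilde Y\rangle}_T^{sub}\right)=\mathcal{O}\left(\frac{K}{N}\right)+\mathcal{O}\left(\frac{1}{K}\right)+\mathcal{O}\left(\frac{N}{K^2}\right).\end{align*}
The only genuine step is the optimisation: the first term increases in $K$ while the third decreases, and balancing $K/N\asymp N/K^2$ forces $K^3\asymp N^2$, i.e. $K=\mathcal{O}(N^{\nicefrac{2}{3}})$; with this choice the first and third terms are both of order $N^{-\nicefrac{1}{3}}$ while the middle term is the negligible $N^{-\nicefrac{2}{3}}$, giving $\var=\mathcal{O}(N^{-\nicefrac{1}{3}})$.

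Finally, since the estimator is exactly unbiased, the bias--variance decomposition gives $\E\big[(\widehat{\langle \tilde X,\tilde Y\rangle}_T^{sub}-\langle \tilde X,\tilde Y\rangle_T)^2\big]=\var\big(\widehat{\langle \tilde X,\tilde Y\rangle}_T^{sub}\big)=\mathcal{O}(N^{-\nicefrac{1}{3}})$, and taking square roots yields the claimed root-mean-square rate $\mathcal{O}(N^{-\nicefrac{1}{6}})$. I expect the main obstacle to be the unbiasedness rather than the variance bound, which is routine once \eqref{disvar}--\eqref{noisevar} are in hand: the point needing care is the structural fact that the subsampled blocks reassemble, via the telescoping representation of Section \ref{sec:3}, into $K$ legitimate Hayashi--Yoshida estimators, so that Proposition \ref{propdiv} may be invoked subsample by subsample and the bias vanishes exactly.
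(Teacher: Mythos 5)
Your proposal is correct and follows essentially the same route as the paper: the paper's ``proof'' of this proposition is precisely the preceding computation in Section \ref{sec:4}, namely the decomposition \eqref{split} into the four uncorrelated blocks, the orders \eqref{disvar}--\eqref{noisevar}, the balancing choice $K=\mathcal{O}(N^{\nicefrac{2}{3}})$, and unbiasedness from the centred, cross-independent noise of Assumption \ref{e}. The only cosmetic difference is that you route unbiasedness through Proposition \ref{propdiv} applied subsample by subsample (note that \ref{propdiv} strictly concerns the full-frequency estimator, so what you really need is just that the noise blocks have zero mean and the efficient-increment products are unbiased for the covered quadratic covariation), whereas the paper asserts it directly; the substance is identical.
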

In the next section we will show that a multi-scale approach can improve this $N^{\nicefrac{1}{6}}$ rate of convergence to $N^{\nicefrac{1}{4}}$.

\section{Upgrading the subsample estimator using a multi-scale approach\label{sec:5}}
In this section we show that using different lower frequencies for subsampling instead of one singular fixed $K_N$ and calculating a weighted mean of the different subsample estimators leads to a more efficient consistent estimator with a better rate of convergence $N^{\nicefrac{1}{4}}$. We calculate $M_N$ subsample estimators using the regular sequence  $i=1,2,3\ldots,M_N-1,M_N$ instead of the fixed $K_N$ in Section \ref{sec:4}. We remark that there is no advantage using a more general sequence of subsample frequencies. We focus on the variance of our multi-scale estimator for the integrated covariance due to the noise terms first, which is the conditional variance given the paths of both efficient processes, and calculate noise-optimal weights that minimize this variance due to market microstructure frictions. In the following we skip the index $N$ for $M$.
The general multi-scale estimator
$$\widehat {\langle \tilde X,\tilde Y\rangle}_T^{mult}=\sum_{i=1}^M\alpha_i\widehat{\langle \tilde X,\tilde Y\rangle}_T^{sub\,,\,i}=\sum_{i=1}^M\alpha_i\frac{1}{i}\sum_{j=i}^N\left(X_{g_j}-X_{l_{j-i}}\right)\left(Y_{\gamma_j}-Y_{\lambda_{j-i}}\right)$$
with weights $\alpha_i$ will give a consistent estimator by choosing the weights optimally.\\
To determine noise-optimal weights, we will impose side conditions on the weights that simplify the minimization problem for the variance due to noise. After that, we will prove that the variance due to mixed terms is asymptotically negligible and we will calculate the discretization variance. As for the subsample estimator there is a trade-off between variance due to noise and variance due to discretization. Choosing $M$ optimally in the way that the mean square error is minimized, we will see that the total variance is of order $N^{-\nicefrac{1}{2}}$. An important fact is that the weights as well as the order of $M$ and the rate of convergence of the multi-scale estimator are in line with the variance case presented by \cite{zhang}. We impose the condition
\begin{subequations}
\begin{equation}\label{unb}\sum_{i=1}^M\alpha_i=1~,\end{equation}
that ensures unbiasedness of the resulting estimator, and the auxiliary condition
\begin{equation}\sum_{i=1}^M\frac{\alpha_i}{i}=0~,\end{equation}
\end{subequations}
that will guarantee that the `leading' term in the variance equals zero, on the weights that gives
\begin{align*}&\sum_{i=1}^M\frac{\alpha_i}{i}\sum_{j=i}^N\left(\epsilon^X_{g_j}-\epsilon^X_{l_{j-i}}\right)\left(\epsilon^Y_{\gamma_j}-\epsilon^Y_{\lambda_{j-i}}\right)\\
&~~~=\sum_{i=1}^M\frac{\alpha_i}{i}\left(\sum_{j=0}^N\left(\epsilon^X_{g_j}\epsilon^Y_{\gamma_j}+\epsilon^X_{l_j}\epsilon^Y_{\lambda_j}\right)+R_N-\sum_{j=i}^N\left(\epsilon^X_{g_j}\epsilon^Y_{\lambda_{j-i}}+\epsilon^X_{l_{j-i}}\epsilon^Y_{\gamma_j}\right)\right)\\
&~~\stackrel{\text{(9b)}}{=}\sum_{i=1}^M\frac{\alpha_i}{i}\left(R_N-\sum_{j=i}^N\left(\epsilon^X_{g_j}\epsilon^Y_{\lambda_{j-i}}+\epsilon^X_{l_{j-i}}\epsilon^Y_{\gamma_j}\right)\right)
\end{align*}
with the remainder term $$R_N=-\sum_{j=0}^{i-1}\epsilon^X_{g_j}\epsilon^Y_{\gamma_j}-\sum_{j=N-i+1}^N\epsilon^X_{l_{j}}\epsilon^Y_{\lambda_j}$$ that is asymptotically negligible in the sum because of Assumption \ref{e} and $i\le M=\KLEINO(N)$. Hence we only have to focus on the residual term for the analysis of the variance due to noise.
Define
$$U_i\:=-\sum_{j=i}^N\left(\epsilon^X_{g_j}\epsilon^Y_{\lambda_{j-i}}+\epsilon^X_{l_{j-i}}\epsilon^Y_{\gamma_j}\right)~.$$
\begin{lem}\label{msc1}
The variance of $U_i$ satisfies for all $i \in \{1,\ldots,M\}$ the following asymptotic inequality:
\begin{equation}\label{absc}\var\left(U_i\right)\le 6 N\eta_X^2\eta_Y^2~.\end{equation}\end{lem}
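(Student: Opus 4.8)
The plan is to compute the variance directly as a second moment and then reduce everything to counting how often the i.i.d.\ noise variables can coincide. Since $\epsilon^X$ and $\epsilon^Y$ are centered and mutually independent, each summand of $U_i$ is a product of two independent mean-zero factors, so $\E[U_i]=0$ and $\var(U_i)=\E[U_i^2]$. Writing $A_j\:=\epsilon^X_{g_j}\epsilon^Y_{\lambda_{j-i}}$ and $B_j\:=\epsilon^X_{l_{j-i}}\epsilon^Y_{\gamma_j}$, so that $U_i=-\sum_{j=i}^N(A_j+B_j)$, I would expand $\E[U_i^2]=\sum_{j,k}\E[(A_j+B_j)(A_k+B_k)]$ and factorize every expectation using $\epsilon^X\perp\epsilon^Y$. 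By Assumption \ref{e} this turns each term into $\eta_X^2\eta_Y^2$ times a product of two indicator functions, one matching an $X$-time index with an $X$-time index and one matching a $Y$-time index with a $Y$-time index; any term whose four noise labels do not pair up exactly has expectation zero.

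The crux is then purely combinatorial and rests on two properties of the synchronization algorithm that I would read off from its case distinctions. First, the starting indices $q_m$ and $r_m$ of the blocks $\mathcal{H}^m,\mathcal{G}^m$ are \emph{strictly} increasing in $m$; consequently the left-endpoint times $l_m=t_{q_m-1}$ and $\lambda_m=\tau_{r_m-1}$ are strictly increasing, hence injective in their index. This immediately kills all like-with-like cross terms: $\E[A_jA_k]$ carries the factor $\1_{\{\lambda_{j-i}=\lambda_{k-i}\}}$ and $\E[B_jB_k]$ the factor $\1_{\{l_{j-i}=l_{k-i}\}}$, both of which force $j=k$. Second, no observation time is the maximum of three consecutive blocks, i.e.\ the max-index sequence $\mu_m$ satisfies $\mu_{m-1}<\mu_{m+1}$, so each fixed time occurs as some $g_j$ for at most two indices $j$.

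With these facts the bookkeeping is short. On the diagonal $\E[A_j^2]=\E[B_j^2]=\eta_X^2\eta_Y^2$, while the mixed diagonal term $\E[A_jB_j]$ vanishes because $g_j$ and $l_{j-i}$ are distinct time indices (strict monotonicity of $q_m$ gives $\mu_j\ge q_j>q_{j-i}-1$ for $i\ge 1$); this contributes $2(N-i+1)\eta_X^2\eta_Y^2$. Off the diagonal only the genuinely mixed terms survive, namely $\E[A_jB_k]=\eta_X^2\eta_Y^2\1_{\{g_j=l_{k-i}\}}\1_{\{\lambda_{j-i}=\gamma_k\}}$ and $\E[B_jA_k]=\eta_X^2\eta_Y^2\1_{\{l_{j-i}=g_k\}}\1_{\{\gamma_j=\lambda_{k-i}\}}$. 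Fixing one index and using that a time is a block-maximum at most twice, each of these two sums has at most $2(N-i+1)$ nonzero summands, so the off-diagonal part is bounded by $4(N-i+1)\eta_X^2\eta_Y^2$. Adding the diagonal yields $\var(U_i)\le 6(N-i+1)\eta_X^2\eta_Y^2\le 6N\eta_X^2\eta_Y^2$.

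The main obstacle is establishing the two grid properties rigorously from the iterative construction, in particular that a time point cannot be the maximum of three successive sets $\mathcal{H}^m$; this requires tracking the algorithm's three cases together with the two updating alternatives for $q_{m+1}$, and showing that the overlap $g_m=g_{m+1}$ is always followed by a strict increase $g_{m+2}>g_{m+1}$. Once these structural facts are in hand, the variance estimate is routine second-moment bookkeeping driven solely by the independence of the noise and the bounded-coincidence counts.
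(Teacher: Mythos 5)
Your proof is correct and follows essentially the same route as the paper: it decomposes $\var(U_i)$ into the diagonal variances (each summand contributing $2\eta_X^2\eta_Y^2$) plus the off-diagonal covariances, and bounds the number of non-vanishing covariance terms using the structure of the synchronized grid, arriving at the same $2N+4N=6N$ accounting. You are merely more explicit than the paper about the two combinatorial facts (strict monotonicity of the left endpoints $l_m,\lambda_m$ and the at-most-twice repetition of block maxima) that justify the paper's terser claim that only neighbouring summands can be correlated, each with covariance at most $2\eta_X^2\eta_Y^2$.
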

\begin{proof}The summands have variances $2\eta_X^2\eta_Y^2$. Because of Assumption \ref{e} only covariances
$$\cov\left(\left(\epsilon^X_{g_j}\epsilon^Y_{\lambda_{j-i}}+\epsilon^X_{l_{j-i}}\epsilon^Y_{\gamma_j}\right)\,,\,\left(\epsilon^X_{g_k}\epsilon^Y_{\lambda_{k-i}}+\epsilon^X_{l_{k-i}}\epsilon^Y_{\gamma_k}\right)\right)$$
with $k=j\pm1$ can be non-zero. Those covariances are smaller or equal than $2\eta_X^2\eta_Y^2$ and we obtain the inequality by separating the variance of $U_i$ in the sum over all variances and covariances.\end{proof}
Under our assumptions the random variables $U_i$ are not necessarily uncorrelated. For different subsample frequencies $i$ and $k\in\{i-1,\ldots,i+1\}$ the covariances
$$\cov\left(\sum_{j=i}^N\epsilon^X_{g_j}\epsilon^Y_{\lambda_{j-i}}+\epsilon^X_{l_{j-i}}\epsilon^Y_{\gamma_j}\,,\,\sum_{r=k}^N\epsilon^X_{g_r}\epsilon^Y_{\lambda_{r-k}}+\epsilon^X_{l_{r-k}}\epsilon^Y_{\gamma_r}\right)$$
can be non-zero.
In fact, only addends with $r\in\{j-1,j,j+1\}$ and $(j-i)=(r-k)$ \textbf{could} have non-zero covariance (if the considered maximum \textbf{and} minimum are equal) and hence correlation effects will be very small but in any case a mathematical analysis gives an upper bound and the exact asymptotic order using an inequality similar to those in the last section.
\begin{lem} \label{msc2}For the variance of the general multi-scale estimator due to noise the asymptotic inequality
\begin{align}\var\left(\sum_{i=1}^M\frac{\alpha_i}{i}\sum_{j=i}^N\left(\epsilon^X_{g_j}-\epsilon^X_{l_{j-i}}\right)\left(\epsilon^Y_{\gamma_{j}}-\epsilon^Y_{\lambda_{j-i}}\right)\right)
\le\sum_{i=1}^M\frac{\alpha_i^2}{i^2}\,18\,N\,\eta_X^2\eta_Y^2 
\end{align} holds.\end{lem}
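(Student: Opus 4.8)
The plan is to lean on the reduction carried out immediately before the statement. Under the auxiliary condition $\sum_{i=1}^M \alpha_i/i=0$ the inner double sum collapses, up to the asymptotically negligible remainder $R_N$, to $\sum_{i=1}^M \frac{\alpha_i}{i}U_i$ with the $U_i$ of Lemma \ref{msc1}. Hence it suffices to control $\var\left(\sum_{i=1}^M \frac{\alpha_i}{i}U_i\right)$, which I would expand into the double sum
$$\sum_{i=1}^M\sum_{k=1}^M \frac{\alpha_i\alpha_k}{ik}\cov\left(U_i,U_k\right).$$
The whole argument then reduces to understanding which entries of this covariance matrix survive and bounding them.

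Second, I would establish the band structure, namely that $\cov\left(U_i,U_k\right)=0$ whenever $|i-k|>1$. Each $U_i$ is a sum of products $\epsilon^X_{g_j}\epsilon^Y_{\lambda_{j-i}}$ and $\epsilon^X_{l_{j-i}}\epsilon^Y_{\gamma_j}$; since the $X$- and $Y$-noise are independent and each family is i.i.d.\ with mean zero by Assumption \ref{e}, the expectation of a product of a summand of $U_i$ (index $j$) and a summand of $U_k$ (index $r$) factorizes and is non-zero only if the two $X$-noise variables and the two $Y$-noise variables coincide pairwise. By the construction of the joint grid, equal observation times can occur only for neighbouring indices, forcing $r\in\{j-1,j,j+1\}$ together with $(j-i)=(r-k)$, and combining these gives $k\in\{i-1,i,i+1\}$. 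This bookkeeping, tracking exactly which of the four mixed products can share grid points while keeping in mind that consecutive sets $\mathcal{H}^i,\mathcal{H}^{i+1}$ (or $\mathcal{G}^i,\mathcal{G}^{i+1}$) may share an endpoint so that $g_j=g_{j+1}$ or certain $l$- and $\lambda$-values coincide, is the delicate part of the proof and the step I expect to be the main obstacle.

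Third, on the surviving tridiagonal band I would bound each covariance by Cauchy-Schwarz together with Lemma \ref{msc1}, namely $|\cov\left(U_i,U_k\right)|\le\sqrt{\var\left(U_i\right)\var\left(U_k\right)}\le 6N\eta_X^2\eta_Y^2$. It then remains to sum the weights $\frac{|\alpha_i||\alpha_k|}{ik}$ over the band. Applying the elementary inequality $\frac{|\alpha_i||\alpha_k|}{ik}\le\frac12\left(\frac{\alpha_i^2}{i^2}+\frac{\alpha_k^2}{k^2}\right)$ and counting multiplicities, where each index $m$ occurs once as a centre for its three neighbours and once as a neighbour of three centres so that $\frac{\alpha_m^2}{m^2}$ accumulates total weight $\frac32+\frac32=3$, yields $\sum_{i}\sum_{k\in\{i-1,i,i+1\}}\frac{|\alpha_i||\alpha_k|}{ik}\le 3\sum_{m=1}^M\frac{\alpha_m^2}{m^2}$. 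Multiplying by the uniform covariance bound $6N\eta_X^2\eta_Y^2$ produces the factor $18$ and the claimed inequality, the total variance due to noise being bounded by $\sum_{i=1}^M\frac{\alpha_i^2}{i^2}\,18\,N\,\eta_X^2\eta_Y^2$.
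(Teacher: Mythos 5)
Your proposal is correct and follows essentially the same route as the paper: reduction to the $U_i$ via the auxiliary weight condition, the tridiagonal covariance structure $\cov(U_i,U_k)=0$ for $|i-k|>1$, Cauchy--Schwarz on the surviving band, and Lemma \ref{msc1} to produce the factor $3\cdot 6=18$. The paper's own proof is just a terser statement of exactly this argument, so your explicit multiplicity count for the factor $3$ merely fills in detail the paper leaves implicit.
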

\begin{proof}Applying the Cauchy-Schwarz inequality to the covariance terms considered above and using inequality \eqref{absc} we conclude that 
\begin{align*}\var\left(\sum_{i=1}^M\frac{\alpha_i}{i}\sum_{j=i}^N\left(\epsilon^X_{g_j}-\epsilon^X_{l_{j-i}}\right)\left(\epsilon^Y_{\gamma_{j}}-\epsilon^Y_{\lambda_{j-i}}\right)\right)
&\le \sum_{i=1}^M\frac{\alpha_i^2}{i^2}\,3\,\var\left(U_i\right)\\ &\le\sum_{i=1}^M\frac{\alpha_i^2}{i^2}\,18\,\eta_X^2\eta_Y^2 \left(N+\KLEINO(N)\right)~
\end{align*}which gives the result of Lemma \ref{msc2}.\end{proof}
Minimization with side conditions yields for an arbitrary constant $c \in \R$:
\begin{align*} \frac{\partial}{\partial \alpha_i}\left(c\sum_j\frac{\alpha_j^2}{j^2}+\lambda_1\left(\sum_j \alpha_j-1\right)+\lambda_2\left(\sum_j\frac{\alpha_j}{j}\right)\right)&=0\\
\Leftrightarrow~~~~~~~~~~~~~~~~~~~~~~~~~~~~~~~~~~~~~~~~~~~~~~~~~~~~~~~~~~~~~\,  2c \frac{\alpha_i}{i^2}+\lambda_1+\frac{\lambda_2}{i}&=0\\
\Leftrightarrow~~~~~~~~~~~~~~~~~~~~~~~~~~~~~~~~~~~~~~~~~~~~~~~~~~~~~~~~~~~~~~~~~~~~~~~~~~~~~~~~~~~~ \alpha_i&=-\frac{1}{2c}\left(i^2\lambda_1+\lambda_2i\right)~.
\end{align*}
Since $1=\sum \alpha_i=-\frac{1}{2c}\left(\lambda_1\sum i^2+\lambda_2\sum i\right)$ and $0=\sum \frac{\alpha_i}{i}=-\frac{1}{2c}\left(\lambda_1\sum i+\lambda_2 M\right)$ we get the result
$$\lambda_1=\frac{-24c}{M^3-M}~~,~~\lambda_2=\frac{12c}{(M-1)M}~~,~~\alpha_i =\frac{12i^2}{M^3-M}-\frac{6i}{(M-1)M}~.$$
The noise-optimal weights are the same as for the MSRV-estimator invented by \cite{zhang} for efficient high-frequency realized variance estimation from noisy observations. This is a positive aspect of using these methods because the weights are calculated once and serve for variance as well as covariance estimation.\\
Inserting the noise-optimal weights
\begin{equation}\label{opt}\alpha_{i,opt}= \frac{12 i^2}{M^3}-\frac{6i}{M^2}\left(1+\KLEINO(1)\right)\end{equation}
in the noise-variance term above yields the result stated in the following proposition.
\begin{prop}
If we insert the noise-optimal weights $\alpha_{i,opt}$ from \eqref{opt} in the general multi-scale estimator and assume that the variances $\eta_X^2$, $\eta_Y^2$ of the noise distributions are of order 1, the asymptotic variance due to noise satisfies
\begin{align}\var\left(\sum_{i=1}^M\frac{\alpha_i}{i}\sum_{j=i}^N\left(\epsilon^X_{g_j}-\epsilon^Y_{\lambda_{j-i}}\right)\left(\epsilon^X_{l_{j-i}}-\epsilon^Y_{\gamma_j}\right)\right)=\mathcal{O}\left(\frac{N}{M^3}\right)~.\end{align}\end{prop}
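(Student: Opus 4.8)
The plan is to combine the variance bound already established in Lemma~\ref{msc2} with a direct evaluation of the weight sum $\sum_{i=1}^M \alpha_{i,opt}^2/i^2$ under the optimal weights \eqref{opt}. By Lemma~\ref{msc2} the variance due to noise is bounded above by $18\,N\,\eta_X^2\eta_Y^2\sum_{i=1}^M\alpha_i^2/i^2$, and since by assumption $\eta_X^2\eta_Y^2=\mathcal{O}(1)$, it suffices to show that the purely deterministic quantity $\sum_{i=1}^M\alpha_{i,opt}^2/i^2$ is of order $M^{-3}$.

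First I would rewrite the summand in a form that isolates the dependence on $i$. Dividing the optimal weight by $i$ gives
$$\frac{\alpha_{i,opt}}{i}=\frac{12i}{M^3}-\frac{6}{M^2}\big(1+\KLEINO(1)\big)~,$$
so that squaring produces three monomials in $i$: a quadratic term $144\,i^2/M^6$, a cross term $-144\,i/M^5$, and a constant term $36/M^4$, each carrying the negligible $\KLEINO(1)$ perturbations inherited from \eqref{opt}.

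Next I would sum each monomial over $i=1,\dots,M$ using the elementary power-sum identities $\sum_{i=1}^M i^2=M^3/3+\mathcal{O}(M^2)$, $\sum_{i=1}^M i=M^2/2+\mathcal{O}(M)$, and $\sum_{i=1}^M 1=M$. This yields leading contributions $48/M^3$, $-72/M^3$ and $36/M^3$ respectively, whose combination $48-72+36=12$ gives
$$\sum_{i=1}^M\frac{\alpha_{i,opt}^2}{i^2}=\frac{12}{M^3}+\mathcal{O}\!\left(\frac{1}{M^4}\right)=\mathcal{O}\!\left(\frac{1}{M^3}\right)~.$$
Multiplying back by $18\,N\,\eta_X^2\eta_Y^2$ then delivers the claimed rate $\mathcal{O}(N/M^3)$.

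The only delicate point is bookkeeping: the three $M^{-3}$ terms partially cancel, so I must keep the power-sum expansions to enough accuracy to confirm that the leading $M^{-3}$ coefficient is the genuine constant $12$ rather than an artifact of dropping lower-order pieces, and I must check that the $\KLEINO(1)$ corrections carried through \eqref{opt} only affect the subleading $\mathcal{O}(M^{-4})$ remainder. In any case, each of the three sums is individually $\mathcal{O}(M^{-3})$, so even were the cancellation imperfect the bound $\mathcal{O}(N/M^3)$ would persist; hence this step poses no genuine obstacle beyond careful accounting, and the substance of the argument lies entirely in the variance estimate of Lemma~\ref{msc2}.
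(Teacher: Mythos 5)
Your proposal is correct and follows essentially the same route as the paper: it invokes Lemma~\ref{msc2} to reduce the claim to the deterministic sum $\sum_{i=1}^M \alpha_{i,opt}^2/i^2$, expands the square into the three monomials $144i^2/M^6-144i/M^5+36/M^4$, and evaluates them with the standard power-sum asymptotics to obtain the leading coefficient $48-72+36=12$, exactly as in the paper's computation yielding $12N\eta_X^2\eta_Y^2/M^3$ up to lower-order terms. Your additional remark that each monomial sum is individually $\mathcal{O}(M^{-3})$, so the cancellation bookkeeping is not even needed for the stated order, is a correct and harmless refinement.
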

\begin{proof} We calculate the minimal noise-variance using the optimal weights \eqref{opt}. For the occurring sums it suffices to use the asymptotic formula $\sum_{i=1}^l i^k=\frac{l^{k+1}}{k+1}+\KLEINO\left(l^{k+1}\right)$ and Lemma \ref{msc1} and \ref{msc2} to deduce the asymptotic order of the variance:
\begin{align*} \sum_{i=1}^M\frac{\left(\alpha_{i,opt}\right)^2}{i^2}\eta_X^2\eta_Y^2 N&= \sum_{i=1}^M\left(\frac{144i^2}{M^6}-\frac{144i}{M^5}+\frac{36}{M^4}\right)\eta_X^2\eta_Y^2 N\\
&=\frac{12N}{M^3}\eta_X^2\eta_Y^2+\KLEINO\left(\frac{N}{M^3}\right)~.\end{align*}
\end{proof}
We have shown that our resulting multi-scale estimator with noise-optimal weights has a variance due to noise contamination of asymptotic order $N/M^3$. Next we focus on the other terms occurring in the total variance.\\
Under the stated assumptions the multi-scale estimator for integrated covariance is unbiased and the variance induced by the mixed summands is asymptotically negligible. Unbiasedness holds by Condition \eqref{unb} on the weights and we focus on the variance of the mixed summands now.\\
The analysis of the terms with the $\mc_i$s and $\mic_i$s (see \eqref{split} for definition) is analogous and we only mention the analysis of the first term. Inserting the noise-optimal weights \eqref{opt} the variance equals
\begin{align*}
&\var\left(\sum_{i=1}^M\frac{\alpha_{i,opt}}{i}\sum_{j=i}^N\left(\tilde X_{g_j}-\tilde X_{l_{j-i}}\right)\left(\epsilon^Y_{\gamma_j}-\epsilon^Y_{\lambda_{j-i}}\right)\right)\\
&=2\sum_{i=1}^M\sum_{k=1}^i\left(\frac{144ik}{M^6}+\frac{36}{M^4}-\frac{72i}{M^5}-\frac{72k}{M^5}\right)\\ &~~~~~~~~~~~~~~~~~\times \sum_{j=i}^N\sum_{r=k}^N\cov\left(\left(\tilde X_{g_j}-\tilde X_{l_{j-i}}\right)\left(\epsilon^Y_{\gamma_j}-\epsilon^Y_{\lambda_{j-i}}\right)\,,\,\left(\tilde X_{g_r}-\tilde X_{l_{r-k}}\right)\left(\epsilon^Y_{\gamma_r}-\epsilon^Y_{\lambda_{r-k}}\right)\right)\,.\end{align*}
The covariances can only be non-zero if the time intervals of the increments of the efficient process $\tilde X$ are overlapping and because of Assumption \ref{e} if $r\in\{j-1,j,j+1\}$ or $(r-k)=(j-i)$ holds. Therefore, we conclude that a constant $C^{*}$ exists such that
\begin{align*}2\sum_{i=1}^M\sum_{k=1}^i\left(\frac{144ik}{M^6}+\frac{36}{M^4}-\frac{72i}{M^5}-\frac{72k}{M^5}\right) N \frac{i}{N} \,2\, C^{*}\eta_Y^2\end{align*}
is an upper bound for the variance and we obtain the asymptotic order $1/M$ for the mixed terms. We have deduced that the mixed terms are negligible in the asymptotic total variance and
hence we focus next on the terms containing the $\ec_i$s (see \eqref{split}) and the variance due to discretization.\\
Thus the variance term of interest is \enlargethispage*{2cm}
\begin{multline*}\var\left(\sum_{i=1}^M\frac{\alpha_i}{i}\sum_{j=i}^N\left(\tilde X_{g_j}-\tilde X_{l_{j-i}}\right)\left(\tilde Y_{\gamma_j}-\tilde Y_{\lambda_{j-i}}\right)\right)=\\
\phantom{\var~~~}2\sum_{i=1}^M\sum_{k=1}^i\frac{\alpha_i\alpha_k}{ik}\cov\big(\sum_{j=i}^N\left(\tilde X_{g_j}-\tilde X_{l_{j-i}}\right)\left(\tilde Y_{\gamma_j}-\tilde Y_{\lambda_{j-i}}\right)\,,~~~~~~~~~~~~~~~~~~~~~~~~~~~~~~~~\\ ~~~~~~~~~~~~~~~~~~\,\sum_{r=k}^N\left(\tilde X_{g_r}-\tilde X_{l_{r-k}}\right)\left(\tilde Y_{\gamma_r}-\tilde Y_{\lambda_{r-k}}\right)\big)~.
\end{multline*}
Considering next the single covariance terms in the discretization-variance using
$$\E\left[\left(\tilde X_{t_i}-\tilde X_{t_{i-l}}\right)\left(\tilde X_{t_j}-\tilde X_{t_{j-k}}\right)\right]=\E\left(\tilde X_{t_{\min{(i,j)}}}-\tilde X_{t_{\max{(i-l,j-k)}}}\right)^2\1_{\{\min{(i,j)}>\max{(i-l,j-k)}\}}$$
for arbitrary $i,j$ and $l,k$ leads to
\begin{align*}\var\left(\sum_{i=1}^M\frac{\alpha_i}{i}\sum_{j=i}^N\left(\tilde X_{g_j}-\tilde X_{l_{j-i}}\right)\left(\tilde Y_{\gamma_j}-\tilde Y_{\lambda_{j-i}}\right)\right)~~~~~~~~~~~~~~~~~~~~~~~~~~\\ \le 2\sum_{k=1}^M\sum_{l=1}^k\left(\frac{144lk}{M^6}+\frac{36}{M^4}-\frac{72l}{M^5}-\frac{72k}{M^5}\right)C^{*}\cdot\frac{l^2k}{N}\le C^{**}\frac{M}{N}\end{align*}
with constants $C^{*}$ and $C^{**}$. The inequality is deduced in the usual way analyzing which increments are overlapping and hence correlated and using the asymptotic orders of the increments known by Assumptions \ref{eff} and \ref{grid}. 
\begin{prop} For the variance of the noise-optimal multi-scale estimator due to discretization the following asymptotic inequality holds:
\begin{equation}\var\left(\sum_{i=1}^M\frac{\alpha_i}{i}\sum_{j=i}^N\left(\tilde X_{g_j}-\tilde X_{l_{j-i}}\right)\left(\tilde Y_{\gamma_j}-\tilde Y_{\lambda_{j-i}}\right)\right)=\mathcal{O}\left( \frac{M}{N}\right)~.\end{equation}\end{prop}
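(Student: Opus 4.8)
The plan is to begin from the exact representation of the discretization variance as the weighted double sum over subsample frequencies,
\[
2\sum_{i=1}^M\sum_{k=1}^i\frac{\alpha_i\alpha_k}{ik}\,\cov\!\left(\sum_{j=i}^N\left(\tilde X_{g_j}-\tilde X_{l_{j-i}}\right)\left(\tilde Y_{\gamma_j}-\tilde Y_{\lambda_{j-i}}\right),\,\sum_{r=k}^N\left(\tilde X_{g_r}-\tilde X_{l_{r-k}}\right)\left(\tilde Y_{\gamma_r}-\tilde Y_{\lambda_{r-k}}\right)\right),
\]
and to bound the inner covariance (the sum over $j$ and $r$) for each fixed pair $(i,k)$ with $k\le i$ before carrying out the weighted summation. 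The guiding principle is that only genuinely overlapping increments contribute, so the estimate is driven by a combinatorial count of overlapping pairs multiplied by the size of a single covariance.

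First I would reduce the inner sum to overlapping pairs. Using the quoted identity for $\E[(\tilde X_{t_i}-\tilde X_{t_{i-l}})(\tilde X_{t_j}-\tilde X_{t_{j-k}})]$ --- which expresses a covariance of two increments as the second moment of the increment over their common subinterval times the overlap indicator --- together with the analogous statements for $\tilde Y$ and for the cross term, I would argue that the covariance of the two products vanishes unless the time window $[l_{j-i},g_j]$ of the $j$-th increment at scale $i$ meets the window $[l_{r-k},g_r]$ of the $r$-th increment at scale $k$. The features of the synchronized joint grid already exploited in the Corollary (non-overlapping $\tilde X$-increments, and mismatched $\tilde X$--$\tilde Y$ increments, being uncorrelated) are exactly what license this reduction.

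Next I would control the size of a single surviving covariance. Expanding the covariance of the two products into products of covariances of the constituent increments --- the drift contributions being of strictly smaller order by Assumption \ref{eff}, just as in the proof of the first Lemma --- each factor is bounded by the second moment of an increment over the overlap interval, hence by $\mathcal{O}(k/N)$ through Assumption \ref{grid} and the mean-value/It\^{o}-isometry argument of that Lemma. Since the product covariance is a sum of two such factors, it is $\mathcal{O}(k^2/N^2)$ for $k\le i$. The combinatorial count is then the decisive bookkeeping step: for each of the $\sim N$ admissible $j$ the number of indices $r$ whose scale-$k$ window meets a fixed scale-$i$ window of length $\sim i/N$ is $\mathcal{O}(i)$, so there are $\mathcal{O}(Ni)$ surviving pairs and the inner sum is $\mathcal{O}\!\left(Ni\cdot k^2/N^2\right)=\mathcal{O}(ik^2/N)$, which matches the displayed bound $C^{*}l^2k/N$ after relabeling so that the larger scale multiplies the square of the smaller.

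Finally I would substitute the explicit noise-optimal weights, turning $\alpha_i\alpha_k/(ik)$ into the polynomial $144ik/M^6+36/M^4-72i/M^5-72k/M^5$, and evaluate the remaining double sum using the elementary asymptotics $\sum_{i=1}^l i^p=l^{p+1}/(p+1)+\o(l^{p+1})$. The leading contribution $\sum_{i=1}^M\sum_{k=1}^i\frac{ik}{M^6}\cdot\frac{ik^2}{N}$ is $\mathcal{O}(M/N)$, and each of the three remaining terms is checked to be of the same order or smaller, yielding the asserted $\mathcal{O}(M/N)$. I expect the main obstacle to lie not in this summation but in the overlap combinatorics: along the synchronized grid, where consecutive sets $\mathcal{H}^j$ and $\mathcal{G}^j$ need not be disjoint and the two scales $i\neq k$ are mismatched, one must track precisely which pairs $(j,r)$ contribute and with what overlap length, since an imprecise count would corrupt the power of $M$ in the final bound.
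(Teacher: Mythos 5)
Your proposal follows essentially the same route as the paper: the same weighted double sum over scale pairs $(i,k)$, the same reduction to overlapping increments via the quoted identity, the same per-pair bound leading to an inner sum of order $ik^2/N$ (the paper's $C^{*}l^2k/N$ after relabeling), and the same substitution of the optimal weights with the elementary power-sum asymptotics to reach $\mathcal{O}(M/N)$. You in fact supply the overlap-counting and single-covariance estimates that the paper compresses into ``deduced in the usual way,'' so the argument is correct and, if anything, more explicit than the original.
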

The discretization variance is of order $M/N$ and we have to choose
$$M=\mathcal{O}(\sqrt{N})$$to reduce the total variance to order $1/M$ or rather $1/\sqrt{N}$. There is a trade-off between the variance terms due to microstructure noise and discretization and the total variance is minimized by a choice of $M$ that induces both being of the same asymptotic order. Calculating $M=\mathcal{O}(\sqrt{N})$ different subsample estimators and calculating the weighted sum with noise-optimal weights results in obtaining an estimator with asymptotic total variance of order $1/\sqrt{N}$ upgrading the rate of convergence to $N^{\nicefrac{1}{4}}$ compared with $N^{\nicefrac{1}{6}}$ for the simple one scale estimator presented in Section \ref{sec:3}. Although the new estimator requires more computing time, the new estimator gains a higher efficiency in covariance estimation in the case of high-frequency noisy observations.\\
We present the results derived in this section again in the following proposition which implies Theorem \ref{multitheo}.
\begin{prop} \label{multi}If we choose $M_N=\mathcal{O}(\sqrt{N})$ and calculate the noise-optimal multi-scale estimator for the integrated covariance 
\begin{align}\widehat{\langle \tilde X,\tilde Y\rangle}_T^{mult}=\sum_{i=1}^{M_N}\left(\frac{12i}{M_N^3}-\frac{6}{M_N^2}\right)\sum_{j=i}^N\left(X_{g_j}-X_{l_{j-i}}\right)\left(Y_{\gamma_j}-Y_{\lambda_{j-i}}\right)~,\end{align}
we obtain a consistent unbiased estimatior with asymtotic variance of order $M^{-1}=N^{-\nicefrac{1}{2}}$:
\begin{subequations}
\begin{align}\label{ew}\E\left[\widehat{\langle \tilde X,\tilde Y\rangle}^{mult}_T-\langle \tilde X,\tilde Y\rangle_T\right]=0\end{align}
\begin{align}\var\left(\widehat{\langle \tilde X,\tilde Y\rangle}^{mult}_T\right)=\mathcal{O}\left(\frac{1}{M}\right)=\mathcal{O}\left(
\sqrt{\frac{1}{N}}\right)~.\end{align}
The bias-variance decomposition yields that
\begin{align}\label{bvz}\left(\E\left[\left(\widehat{\langle \tilde X,\tilde Y\rangle}_T^{mult}-\langle \tilde X,\tilde Y\rangle_T\right)^2\right]\right)^{\nicefrac{1}{2}}=\mathcal{O}\left(N^{-\nicefrac{1}{4}}\right)~.\end{align}
\end{subequations}
We will prove the rate of convergence $N^{\nicefrac{1}{4}}$ to be optimal in the following section.
\end{prop}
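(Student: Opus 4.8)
The plan is to assemble the three variance contributions established earlier in this section and to combine them with the unbiasedness guaranteed by condition \eqref{unb}. First I would settle unbiasedness \eqref{ew}: by Assumption \ref{e} the microstructure noise is centred and independent of the efficient processes, so each single-scale subsample estimator inherits the unbiasedness of the Hayashi--Yoshida estimator from Proposition \ref{propdiv}, giving $\E[\widehat{\langle \tilde X,\tilde Y\rangle}_T^{sub\,,\,i}]=\langle \tilde X,\tilde Y\rangle_T$ for every $i$. By linearity of expectation the weighted sum satisfies $\E[\widehat{\langle \tilde X,\tilde Y\rangle}_T^{mult}]=\big(\sum_{i=1}^M\alpha_i\big)\langle \tilde X,\tilde Y\rangle_T$, and since the noise-optimal weights obey $\sum_{i=1}^M\alpha_{i,opt}=1$ --- precisely the side condition \eqref{unb} that was used to fix the Lagrange multipliers --- \eqref{ew} follows at once.

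Next I would bound the variance using the orthogonal decomposition \eqref{split}, which under Assumption \ref{e} writes the total variance as the sum of four mutually uncorrelated pieces: the discretization variance (the $\ec$ terms), the two mixed variances (the $\mc$ and $\mic$ terms) and the pure noise variance (the $\nc$ terms). Each asymptotic order was already derived above for the noise-optimal weights \eqref{opt}: the noise variance is $\mathcal{O}(N/M^3)$, the mixed variances are each $\mathcal{O}(\nicefrac{1}{M})$, and the discretization variance is $\mathcal{O}(M/N)$. Adding these, $\var(\widehat{\langle \tilde X,\tilde Y\rangle}_T^{mult})=\mathcal{O}(N/M^3)+\mathcal{O}(\nicefrac{1}{M})+\mathcal{O}(M/N)$.

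The crux is then the choice of $M$. Setting $M=\mathcal{O}(\sqrt{N})$ makes all three terms simultaneously of order $N^{-\nicefrac{1}{2}}$, since $N/M^3\sim N^{-\nicefrac{1}{2}}$, $\nicefrac{1}{M}\sim N^{-\nicefrac{1}{2}}$ and $M/N\sim N^{-\nicefrac{1}{2}}$; a short trade-off argument confirms this is optimal, because shrinking $M$ inflates the noise term $N/M^3$ while enlarging $M$ inflates the discretization term $M/N$, so no other power of $N$ improves the rate. This gives the claimed variance order $\mathcal{O}(\nicefrac{1}{M})=\mathcal{O}(N^{-\nicefrac{1}{2}})$. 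Finally, since the estimator is unbiased, the bias-variance decomposition collapses the mean square error onto the variance, so $(\E[(\widehat{\langle \tilde X,\tilde Y\rangle}_T^{mult}-\langle \tilde X,\tilde Y\rangle_T)^2])^{\nicefrac{1}{2}}=\sqrt{\var(\widehat{\langle \tilde X,\tilde Y\rangle}_T^{mult})}=\mathcal{O}(N^{-\nicefrac{1}{4}})$, which is \eqref{bvz}.

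The one place needing genuine care --- and essentially the only place where the argument could fail --- is that the weights $\alpha_{i,opt}$ were optimised against the noise variance alone. I would therefore verify that, under these very weights, the mixed and discretization variances stay of order $\mathcal{O}(\nicefrac{1}{M})$ and $\mathcal{O}(M/N)$ and are not accidentally inflated; but this is exactly the content of the two preceding propositions. Everything else is routine bookkeeping of asymptotic orders.
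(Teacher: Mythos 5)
Your proposal is correct and follows essentially the same route as the paper: unbiasedness from the weight condition $\sum_i\alpha_i=1$ together with centred, independent noise, and the variance obtained by summing the four uncorrelated contributions of the decomposition \eqref{split} — noise $\mathcal{O}(N/M^3)$, mixed $\mathcal{O}(1/M)$, discretization $\mathcal{O}(M/N)$ — all balanced at $\mathcal{O}(N^{-\nicefrac{1}{2}})$ by the choice $M=\mathcal{O}(\sqrt{N})$. The point you flag as needing care (that the mixed and discretization variances are evaluated under the noise-optimal weights) is exactly what the two preceding propositions in the paper establish, so nothing is missing.
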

\begin{remark}We suppose that an extension of Proposition \ref{multi} for non-i.\,i.\,d.\,noise is possible under a milder assumption of exponentially decreasing mixing coefficients such that the equations \eqref{ew}-\eqref{bvz} still hold. This extension for the one-dimensional case has been developed in \cite{zhangmykland2}.\end{remark}

\section{A lower bound for the rate of convergence\label{sec:6}}
In the following we show the LAN (local asymptotic normality) property for a constant correlation coefficient $\rho=corr(B^X,B^Y)$ of the two Brownian motions of $\tilde X$ and $\tilde Y$ with rate $N^{-\nicefrac{1}{4}}$ within the following simplified model and conclude the rate-optimality of our estimator defined in the last chapter.\\
We have the observations:
\begin{align*} X_{t_i}&=\int_0^{t_i} dB_t^X+\epsilon_{t_i}^X\\
							 Y_{t_i}&=\int_0^{t_i} dB_t^Y+\epsilon_{t_i}^Y~~~~i=0,\ldots,N~.\end{align*}

We restrict ourselves to synchronous observations and equidistant time intervals $\d t_i=\d t= 1/N$. Furthermore we assume the discrete noise processes to be independent of the efficient processes and independent to each other (as in Assumption \ref{e} before). We strengthen the i.i.d.\,assumption for the noise to an i.i.d.\,-Gaussian assumption:
$$\epsilon_{t_i}^X \stackrel{iid}{\sim}\mathcal{N}(0,\eta_X^2)~,~\epsilon_{t_i}^Y\stackrel{iid}{\sim}\mathcal{N}(0,\eta_Y^2)~,i=0,\ldots,N~.$$\\
We want to estimate the parameter $\rho$ from observed increments $\left(\d X_{t_1},\ldots,\d X_{t_N},\d Y_{t_1},\ldots,\d Y_{t_N}\right)$ taking values in a measurable space $\left(\Omega_{2N},\mathcal{F}_{2N}\right)$ with law $\P_{\rho}^{2N}$.
Local asymptotic normality with rate $N^{-\nicefrac{1}{4}}$ means that for a real sequence $h_N\rightarrow h$ the sequence of log-likelihoods converges in law to a limit of the following form:
$$\log\left(\frac{d\P_{\rho+N^{-\frac{1}{4}}h_N}^{2N}}{d\P_{\rho}^{2N}}\right)\stackrel{\P_{\rho}^{2N}}{\longrightarrow}hZ\sqrt{I\left(\rho\right)}-\frac{h^2I\left(\rho\right)}{2}$$
with $Z \sim \mathcal{N}(0,1)$ and $I(\rho)$ denoting the Fisher information.
Then the limit distribution of a sequence of estimators $\hat \rho_{2N}$ is under regularity conditions the convolution of a Gaussian distribution and a noise factor. The maximum risk of any estimator is bounded below by the Gaussian risk and the minimax theorem gives the result on how well the parameter can be estimated asymptotically. See e.\,g.\, \cite{vandervaart} and \cite{vandervaart2} for further information on LAN and optimal convergence rates.\\
We summarize the results of this section in the following Proposition \ref{bound}:
\begin{prop}\label{bound}In the simple model of two synchronously equidistantly observed standard Brownian motions $\tilde X$ and $\tilde Y$ with constant correlation $\rho$ and an observation noise described by i.i.d.\,Gaussian errors with standard deviations $\eta_X$ and $\eta_Y$ the LAN property with $N^{-\nicefrac{1}{4}}$-rate holds , where $N$ denotes the number of observations in the interval $[0,1]$. Assuming without loss of generality $\eta_X\ge \eta_Y$, we obtain the following lower and upper bound for the asymptotic Fisher information:
\begin{equation}\label{fisherbounds}\frac{1}{8\eta_X}\left(\frac{1}{(1+\rho)^{\nicefrac{3}{2}}}+\frac{1}{(1-\rho)^{\nicefrac{3}{2}}}\right)\le I(\rho)\le \frac{\sqrt{2}}{8}\frac{1}{\sqrt{\eta_X^2+\eta_Y^2}}\left(\frac{1}{(1+\rho)^{\nicefrac{3}{2}}}+\frac{1}{(1-\rho)^{\nicefrac{3}{2}}}\right)~.\end{equation}
Particularly assuming the variance of both noise processes to be equal $\left(\eta_X=\eta_Y=\eta\right)$ we can calculate the exact asymptotic Fisher information.
It is given by \begin{equation}\label{fisher}I(\rho)=\frac{1}{8\eta}\left(\frac{1}{(1+\rho)^{\nicefrac{3}{2}}}+\frac{1}{(1-\rho)^{\nicefrac{3}{2}}}\right)~.\end{equation}.\end{prop}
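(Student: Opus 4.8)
The plan is to exploit the fact that, in this simplified model, the whole vector of observed increments $z=(\d X_{t_1},\dots,\d X_{t_N},\d Y_{t_1},\dots,\d Y_{t_N})$ is centred and jointly Gaussian, so that $\P_\rho^{2N}=\mathcal{N}(0,\Sigma_\rho)$ with a covariance matrix depending on $\rho$ only through its $X$--$Y$ cross-block. Since efficient increments over disjoint intervals are independent and the noise enters each increment as a first difference $\epsilon_{t_i}-\epsilon_{t_{i-1}}$, I would write
\[\Sigma_\rho=\begin{pmatrix}\tfrac1N I+\eta_X^2 G & \tfrac\rho N I\\[2pt] \tfrac\rho N I & \tfrac1N I+\eta_Y^2 G\end{pmatrix},\]
where $G$ is the $N\times N$ tridiagonal matrix with $2$ on the diagonal and $-1$ on the off-diagonals. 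The decisive structural step is to diagonalise $G=Q\,\mathrm{diag}(\mu_k)\,Q^\top$ using the explicit sine eigenvectors and eigenvalues $\mu_k=2\bigl(1-\cos\tfrac{k\pi}{N+1}\bigr)$, $k=1,\dots,N$. Applying $Q^\top\oplus Q^\top$ leaves the blocks $\tfrac1N I$ and $\tfrac\rho N I$ invariant and decouples the experiment into $N$ \emph{independent} bivariate Gaussian experiments, the $k$-th observing a centred normal with covariance $\Sigma_k=\begin{pmatrix}a_k&c\\ c&b_k\end{pmatrix}$, $a_k=\tfrac1N+\eta_X^2\mu_k$, $b_k=\tfrac1N+\eta_Y^2\mu_k$, $c=\rho/N$.

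For LAN I would expand the Gaussian log-likelihood ratio. For centred normals,
\[\log\frac{d\P_{\rho'}^{2N}}{d\P_\rho^{2N}}(z)=-\tfrac12 z^\top\bigl(\Sigma_{\rho'}^{-1}-\Sigma_\rho^{-1}\bigr)z-\tfrac12\log\frac{\det\Sigma_{\rho'}}{\det\Sigma_\rho},\]
and the factorisation makes both the quadratic form and the log-determinant split over $k$. Setting $\rho'=\rho+N^{-1/4}h_N$ and expanding to second order, the linear term $N^{-1/4}h_N S_N$ is built from independent per-frequency scores, with $\mathrm{Var}(N^{-1/4}S_N)=N^{-1/2}\sum_k I_k\to I(\rho)$, while the quadratic term converges to $-\tfrac12 h^2 I(\rho)$; the rate $N^{-1/4}$ is precisely the one for which $N^{-1/2}$ times the total Fisher information stays bounded. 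The limit law $hZ\sqrt{I(\rho)}-\tfrac12 h^2 I(\rho)$ then follows from a Lindeberg--Feller central limit theorem for the array of scores. I expect the main obstacle here: one must control the second-order remainder uniformly and verify the Lindeberg condition for a triangular array whose informative band is only $k=\mathcal{O}(\sqrt N)$ and whose individual variances both vary with $N$, together with the negligibility of the determinant correction.

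The Fisher information I would compute from $I_k=\tfrac12\,\mathrm{tr}\bigl((\Sigma_k^{-1}\partial_\rho\Sigma_k)^2\bigr)$ with $\partial_\rho\Sigma_k=\tfrac1N\bigl(\begin{smallmatrix}0&1\\1&0\end{smallmatrix}\bigr)$, which after the $N^{-2}$ cancellations gives, with $\tilde\lambda_k:=N\mu_k$ and $s_k^2:=(1+\eta_X^2\tilde\lambda_k)(1+\eta_Y^2\tilde\lambda_k)$,
\[I_k=\frac{s_k^2+\rho^2}{(s_k^2-\rho^2)^2}=\frac12\Bigl(\frac{1}{(s_k-\rho)^2}+\frac{1}{(s_k+\rho)^2}\Bigr).\]
Using $\tilde\lambda_k\to\pi^2x^2$ under $x=k/\sqrt N$ with mesh $\Delta x=N^{-1/2}$, the rescaled sum $N^{-1/2}\sum_k I_k$ is a Riemann sum converging to
\[I(\rho)=\frac12\int_0^\infty\Bigl(\frac{1}{(s(x)-\rho)^2}+\frac{1}{(s(x)+\rho)^2}\Bigr)dx,\qquad s(x)=\sqrt{(1+\eta_X^2\pi^2x^2)(1+\eta_Y^2\pi^2x^2)},\]
and I would justify this passage with tail control for $k$ up to $N$ (where $I_k$ decays like $\tilde\lambda_k^{-2}$, so the tail vanishes) and uniformity of the small-angle approximation $\mu_k\approx(k\pi/(N+1))^2$.

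Finally I would evaluate and bound the integral. In the equal-variance case $s(x)=1+\eta^2\pi^2x^2$, the substitution $w=\eta\pi x$ together with $\int_0^\infty(w^2+\beta)^{-2}dw=\tfrac\pi4\beta^{-3/2}$ applied at $\beta=1\pm\rho$ yields exactly \eqref{fisher}. For the general bounds I would use monotonicity of the integrand in $s$ and the hypothesis $\eta_Y\le\eta_X$: bounding $s(x)\le 1+\eta_X^2\pi^2x^2$ (the smaller factor is dominated by the larger) and re-using the equal-variance evaluation with $\eta_X$ gives the lower bound in \eqref{fisherbounds}. For the upper bound I would bound $s(x)$ from below, e.g.\ by dropping the nonnegative quartic term to get $s(x)\ge\sqrt{1+(\eta_X^2+\eta_Y^2)\pi^2x^2}$, and compare against the root-mean-square scale $\sqrt{(\eta_X^2+\eta_Y^2)/2}$. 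Pinning down the exact constant $\sqrt2/8$ in the upper bound, and reconciling it with the equality attained in the equal-variance case, is the delicate computational point I would scrutinise most carefully.
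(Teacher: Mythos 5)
Your setup is correct and, for the equal--variance case and the lower bound, essentially reproduces the paper's argument: the paper also exploits the tridiagonal $1$-Toeplitz structure with eigenvalues $2\eta^2\bigl(1-\cos\frac{k\pi}{N+1}\bigr)+1/N$, reduces $\sum_j\gamma_j^2$ to a Riemann sum, and evaluates the same integral $\int_0^\infty (w^2+1\pm\rho)^{-2}\,dw=\frac{\pi}{4}(1\pm\rho)^{-\nicefrac{3}{2}}$ to obtain \eqref{fisher}; your Lindeberg--Feller CLT for the array of scores replaces its citation of Theorem VIII-3.32 of Jacod--Shiryaev, and your majorant $s(x)\le 1+\eta_X^2\pi^2x^2$ delivers the lower bound in \eqref{fisherbounds} just as Lemma \ref{evlem} and Proposition \ref{propineq} do. The genuinely different step is your exact factorization into $N$ independent bivariate experiments with full per-block information $\frac12\mathrm{tr}\bigl((\Sigma_k^{-1}\partial_\rho\Sigma_k)^2\bigr)$. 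The paper instead perturbs only the eigenvalues of $\Sigma_\theta$; when $\eta_X\neq\eta_Y$ the eigenvectors of $\Sigma_\theta$ rotate with $\theta$, so the two computations coincide only in the equal-variance case. This difference is not cosmetic, and it is where your proposal breaks down.

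The gap is exactly the point you flagged: the upper bound. First, the minorant $s(x)\ge\sqrt{1+(\eta_X^2+\eta_Y^2)\pi^2x^2}$ does not produce the constant $\sqrt2/8$: at $\rho=0$ it gives the bound $\int_0^\infty\bigl(1+(\eta_X^2+\eta_Y^2)\pi^2x^2\bigr)^{-1}dx=\tfrac12(\eta_X^2+\eta_Y^2)^{-\nicefrac{1}{2}}$, which exceeds the claimed value $\tfrac{\sqrt2}{4}(\eta_X^2+\eta_Y^2)^{-\nicefrac{1}{2}}$, so the claimed inequality is not established. Second, and more seriously, the sharper comparison available in your framework goes the wrong way: by AM--GM, $s(x)\le 1+\tfrac{\eta_X^2+\eta_Y^2}{2}\pi^2x^2$ (strictly for $x>0$ when $\eta_X>\eta_Y$), and since your integrand is decreasing in $s$ this yields $I(\rho)\ge\frac{\sqrt2}{8}(\eta_X^2+\eta_Y^2)^{-\nicefrac{1}{2}}\bigl((1+\rho)^{-\nicefrac{3}{2}}+(1-\rho)^{-\nicefrac{3}{2}}\bigr)$ --- the claimed upper bound emerges as a \emph{lower} bound for the quantity you compute. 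The discrepancy is precisely the eigenvector-rotation contribution that your trace formula retains and the paper's eigenvalue-ratio coefficients $\gamma_{\pm}^{(i)}$ discard (at $\rho=0$, for instance, the paper's $\gamma_{\pm}^{(i)}$ are of order $N^{-\nicefrac{1}{2}}$ while each of your blocks still carries information $s_k^{-2}>0$). So the second inequality in \eqref{fisherbounds} cannot be obtained along your route; the upper bound your method does yield is $I(\rho)\le\frac{1}{8\eta_Y}\bigl((1+\rho)^{-\nicefrac{3}{2}}+(1-\rho)^{-\nicefrac{3}{2}}\bigr)$, from $s(x)\ge 1+\eta_Y^2\pi^2x^2$. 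You would need either to adopt the paper's weaker, eigenvalue-only notion of $I(\rho)$ (and then justify the likelihood expansion in a $\theta$-dependent eigenbasis) or to restate the proposition's upper bound.
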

Proposition \ref{bound} implies Theroem \ref{boundtheo} and gives, furthermore, bounds for the asymptotic Fisher information. 
\begin{remark}
We prove the LAN property with rate $N^{-\nicefrac{1}{4}}$ in this simplified model and thus the optimality of our multi-scale estimator. It has the optimal rate of convergence even in the synchronous equidistant case. The asymptotic Fisher information is enclosed between the `natural' lower and an intuitive upper bound. We state that the Fisher information \eqref{fisher} has the following asymptotic behaviour:
$$I\left(\rho\right)\rightarrow \infty ~~\mbox{for} ~~\rho\rightarrow \pm 1~~\mbox{and}~~I\left(\rho\right)\rightarrow 0~~\mbox{for}~~\eta\rightarrow \infty~.$$
\end{remark}
\begin{proof}
First we will prove the LAN property for the simpler case of equal noise variances $\eta_X=\eta_Y=\eta$ and calculate the asymptotic Fisher information \eqref{fisher}.
We want to derive the distribution of the increments
\begin{align*}~~~~~~~~~\d X_{t_i}&=\int_{t_{i-1}}^{t_i}dB_t^X+\epsilon_{t_i}^X-\epsilon_{t_{i-1}}^X~,\\
					\mbox{and}~~~	~	\d Y_{t_i}&=\int_{t_{i-1}}^{t_i}dB_t^Y+\epsilon_{t_i}^Y-\epsilon_{t_{i-1}}^Y~.\end{align*}	
The constant correlation parameter is denoted by $\theta$ in the following.												
There exists a Brownian motion $B$ independent of $\tilde X$ such that the following equation holds:
$$\d Y_{t_i}=\int_{t_{i-1}}^{t_i}\theta dB_t^X+\sqrt{1-\theta^2}\int_{t_{i-1}}^{t_i}dB_t+\epsilon_{t_i}^Y-\epsilon_{t_{i-1}}^Y~.$$
Taking this into account we can easily calculate the covariations of the increments:
\begin{align*} \cov(\d X_{t_i},\d X_{t_j})=\cov(\d Y_{t_i},\d Y_{t_j})=\begin{cases} \d t+2\eta^2 ~~\mbox{if}~~~~~~~~~~ i=j\\~~ -\eta^2 ~~~~\,~~\mbox{if}~|i-j|=1\\ ~~~~~~\, 0 ~~~~~~~\,~\mbox{if}~|i-j|>1\end{cases},\end{align*}
\begin{align*}\cov(\d X_{t_i},\d Y_{t_j})=\begin{cases} \theta \d t ~~\mbox{if}~&i=j\\ ~~0 ~~~~~\mbox{if}~ &i\ne j\end{cases}~.\end{align*}
The random vector $(\d X_{t_1},\ldots,\d X_{t_N},\d Y_{t_1},\ldots,\d Y_{t_N})^t$ has a $2N \times 2N$ dimensional covariance matrix
\begin{align*}\Sigma_{\theta}=\left(\begin{array}{cc} A_N & D_N\\ D_N & A_N\end{array}\right)\end{align*}
with the $N \times N$ diagonal matrix
\begin{align*}D_N=\left(\begin{array}{cccc} \theta \d t&0 & \ldots &0 \\ 0& \ddots & & \vdots\\ \vdots&&\ddots &0 \\ 0&\ldots&0&\theta \d t\end{array}\right)\end{align*}
and the $N\times N$ tridiagonal 1-Toeplitz matrix
\begin{align*}A_N=\left(\begin{array}{ccccc} \d t+2\eta^2&-\eta^2 & 0 &\ldots &0\\ -\eta^2& \ddots& \ddots &&\vdots \\ 0&\ddots&\ddots &\ddots&0 \\ \vdots& &\ddots&\ddots&-\eta^2\\ 0 &\ldots &0&-\eta^2&\d t+2\eta^2 \end{array}\right)~.\end{align*}
This special structure of the covariance matrix makes it possible to explicitly compute the eigenvalues of $\Sigma_{\theta}$. Here the fact that we assumed the variances of both noise processes to be equal plays an important role.\\
We write the $N$-dimensional identity matrix as $\1_N$. Then the characteristic polynomial of $\Sigma_{\theta}$ can be written as
$$\det\left(\Sigma_{\theta}-\lambda \1_{2N}\right)=\left(\det\left(A_N-\lambda\1_N\right)\right)^2-\left(\theta \d t\right)^{2N}~.$$
Using a Laplace-expansion, the characteristic polynomials of $A_N$ can be computed by a recursion:
\begin{align*}\det\left(A_N-\lambda\1_N\right)&=\left(\d t+2\eta^2-\lambda\right)\det\left(A_{N-1}-\lambda \1_{N-1}\right)+\left(\eta^2\right)^2\det\left(A_{N-2}-\lambda\1_{N-2}\right)\\
&=\sum_{k=0}^{\lfloor \frac{N}{2}\rfloor}\left(-1\right)^k\binom{N-k}{k}\left(\d t+2\eta^2-\lambda\right)^{N-2k}\left(\eta^2\right)^{2k}~.\end{align*}
The eigenvalues of $A_N$ are $\lambda_{i,N}=\d t+2\eta^2\left(1-\cos{\frac{i\pi}{N+1}}\right)~,~~i=1,\ldots,N$,
and because of the simple structure of $\Sigma_{\theta}$ we can deduce the $2N$ eigenvalues of the covariance matrix directly:
\begin{subequations}
\begin{align} \lambda_{i,N}^{+}(\theta)=\d t (1+\theta)+2\eta^2\left(1-\cos{\frac{i\pi}{N+1}}\right)~,~~i=1,\ldots,N ~,\\
							\lambda_{i,N}^{-}(\theta)=\d t (1-\theta)+2\eta^2\left(1-\cos{\frac{i\pi}{N+1}}\right)~,~~i=1,\ldots,N ~.							
							\end{align}							
With the notation
\begin{equation} \lambda_{j,2N}(\theta)=\begin{cases} \lambda_{i,N}^+~~~\mbox{if}~~j=2i-1~,~~i=1,\ldots,N~,\\ \lambda_{i,N}^-~~~\mbox{if}~~j=2i~~~~~~~~,~~i=1,\ldots,N\end{cases}~,\end{equation}
\end{subequations}
we can write the $2N \times 2N$ diagonal matrix of the eigenvalues as $\Lambda_{\theta}^{2N}$ with $(\Lambda_{\theta}^{2N})_{jj}=\lambda_{j,2N}(\theta)$. $\Sigma_{\theta}$ can be diagonalized by an $2N \times 2N$ orthogonal matrix $P^{2N}$ which is independent of $\theta$. The random vector $P^{2N} \cdot \left(\d X_{t_1},\ldots,\d X_{t_N},\d Y_{t_1},\ldots,\d Y_{t_N}\right)^t$ is centered Gaussian with covariance matrix $\Lambda_{\theta}^{2N}$. We define the $2N$-dimensional random vector $T^{2N}$ by
\begin{align*}\left(T^{2N}\right)_j\:=\frac{1}{\sqrt{\lambda_{j,2N}(\rho)}}\left(P^{2N}\cdot \left(\d X_{t_1},\ldots,\d X_{t_N},\d Y_{t_1},\ldots,\d Y_{t_N}\right)^t\right)_j\\ ~~~~~~~~~~~~~~~~~~~~~~~~~~~~~~~~~~~~~~~~~~~~~\sim ~\mathcal{N}\left(0\,,\,\frac{\lambda_{j,2N}(\theta)}{\lambda_{j,2N}(\rho)}\right)~.\end{align*}
To prove the LAN property we have to examine the $\log$-likelihood
\begin{align*}&~~\log{\left(\frac{d\P_{\rho+N^{-\frac{1}{4}}h_N}^{2N}}{d\P_{\rho}^{2N}}\right)}\\&=\log{\left[\left(\frac{\prod_{j=1}^{2N}\lambda_{j,2N}\left(\rho+N^{-\frac{1}{4}}h_N\right)}{\prod_{j=1}^{2N}\lambda_{j,2N}(\rho)}\right)^{-\frac{1}{2}}\right]}-\frac{1}{2}\sum_{j=1}^{2N}\left(T^{2N}\right)_j^2\left(\frac{\lambda_{j,2N}(\rho)}{\lambda_{j,2N}\left(\rho+N^{-\frac{1}{4}}h_N\right)}-1\right)
\\&=-\frac{1}{2}\sum_{j=1}^{2N}\left(\log{\left(1+\gamma_j^{2N}\right)}-\left(T^{2N}\right)_j^2\frac{\gamma_j^{2N}}{\gamma_j^{2N}+1}\right)\end{align*}
where
$$\gamma_j^{2N}\:=\frac{\lambda_{j,2N}\left(\rho+N^{-\nicefrac{1}{4}}h_N\right)}{\lambda_{j,2N}(\rho)}-1=\frac{\d t\cdot N^{-\nicefrac{1}{4}}h_N}{\lambda_{j,2N}(\rho)}~.$$
The proof is now analogous to the one dimensional case (see \cite{gloter}) and using Theorem VIII-3.32 in \cite{shir} it remains to show that
\begin{equation} \sup_{1\le j\le 2N}{|\gamma_j^{2N}|}\rightarrow 0~~~\mbox{and} ~~~\sum_{j=1}^{2N}\left(\gamma_j^{2N}\right)^2\rightarrow 2h^2 I(\rho)~~.\end{equation}
The first condition is obviously fulfilled. To prove the second one we write the sum of the squares as a Riemann sum and use an inequality including the corresponding integral:
\begin{align*}
\sum_{j=1}^{2N}\left(\gamma_j^{2N}\right)^2&=\sum_{j=1}^N\frac{N^{-\nicefrac{1}{2}}h_N^2}{\left(1+\rho+\frac{2\eta^2}{\d t}\left(1-\cos{\frac{j\pi}{N+1}}\right)\right)^2}+\sum_{j=1}^N\frac{N^{-\nicefrac{1}{2}}h_N^2}{\left(1-\rho+\frac{2\eta^2}{\d t}\left(1-\cos{\frac{j\pi}{N+1}}\right)\right)^2}\\ &=\frac{N^{\nicefrac{1}{2}}h_N^2\left(\d t\right)^2}{\left(\eta^2\right)^2\pi} \underbrace{\frac{\pi}{N}\sum_{j=1}^N\frac{1}{\left(2\left(1-\cos{\frac{j\pi}{N+1}}\right)+\frac{\d t(1+\rho)}{\eta^2}\right)^2}}_{=S_N}\\ &~~~~~~~~~~~~+\frac{N^{\nicefrac{1}{2}}h_N^2\left(\d t\right)^2}{\left(\eta^2\right)^2\pi} \underbrace{\frac{\pi}{N}\sum_{j=1}^N\frac{1}{\left(2\left(1-\cos{\frac{j\pi}{N+1}}\right)+\frac{\d t(1-\rho)}{\eta^2}\right)^2}}_{=\tilde S_N}~.%\\
%&\rightarrow \frac{N^{-\nicefrac{3}{2}}h^2}{\left(\eta^2\right)^2\pi}\underbrace{\int_0^{\pi}\frac{1}{\left(2\left(1-\cos{z}\right)+\frac{\d t(\rho+1)}{\eta^2}\right)^2}dz}_{=J}\\
%&~~~~~~~~~~~~+ \frac{N^{-\nicefrac{3}{2}}h^2}{\left(\eta^2\right)^2\pi}\underbrace{\int_0^{\pi}\frac{1}{\left(2\left(1-\cos{z}\right)+\frac{\d t(1-\rho)}{\eta^2}\right)^2}dz}_{=\tilde J}\\
%&=\frac{N^{-\nicefrac{3}{2}}h^2}{\left(\eta^2\right)^2\pi}\left[\frac{\pi\left(2+\frac{\d t(\rho+1)}{\eta^2}\right)}{\left(\frac{\d t(\rho+1)}{\eta^2}\left(\frac{\d t(\rho+1)}{\eta^2}+4\right)\right)^{\nicefrac{3}{2}}}+\frac{\pi\left(2+\frac{\d t(1-\rho)}{\eta^2}\right)}{\left(\frac{\d t(1-\rho)}{\eta^2}\left(\frac{\d t(1-\rho)}{\eta^2}+4\right)\right)^{\nicefrac{3}{2}}}\right]\\
%&\rightarrow \frac{h^2}{4\eta}\left(\frac{1}{(\rho+1)^{\nicefrac{3}{2}}}+\frac{1}{(1-\rho)^{\nicefrac{3}{2}}}\right)=2h^2 I(\rho)
\end{align*}
For the integral
$$J=\int_0^{\pi}\frac{1}{\left(2\left(1-\cos{z}\right)+\frac{\d t(1+\rho)}{\eta^2}\right)^2}dz$$
and accordingly
$$\tilde J=\int_0^{\pi}\frac{1}{\left(2\left(1-\cos{z}\right)+\frac{\d t(1-\rho)}{\eta^2}\right)^2}dz~,$$ the following inequalities with the lower and upper Darboux sums hold:
$$\frac{\pi}{N}\sum_{j=1}^N\frac{1}{\left(2\left(1-\cos{\frac{(j+1)\pi}{N+1}}\right)+\frac{\d t(1+\rho)}{\eta^2}\right)^2}\le J\le \frac{\pi}{N}\sum_{j=1}^N\frac{1}{\left(2\left(1-\cos{\frac{j\pi}{N+1}}\right)+\frac{\d t(1+\rho)}{\eta^2}\right)^2}$$
and accordingly
$$\frac{\pi}{N}\sum_{j=1}^N\frac{1}{\left(2\left(1-\cos{\frac{(j+1)\pi}{N+1}}\right)+\frac{\d t(1-\rho)}{\eta^2}\right)^2}\le \tilde J\le \frac{\pi}{N}\sum_{j=1}^N\frac{1}{\left(2\left(1-\cos{\frac{j\pi}{N+1}}\right)+\frac{\d t(1-\rho)}{\eta^2}\right)^2}~.$$
Thus the following inequalities hold for the Riemann sums $S_N$ and $\tilde S_N$, respectively:
$$J\le S_N\le J+\frac{\pi}{N}\frac{1}{N\left(4+\frac{\d t(1+\rho)}{\eta^2}\right)^2}-\frac{\pi}{N}\frac{1}{\left(2\left(1-\cos{\frac{\pi}{N+1}}\right)+\frac{\d t+(1+\rho)}{\eta^2}\right)^2}$$
and
$$\tilde J\le \tilde S_N\le \tilde J+\frac{\pi}{N}\frac{1}{N\left(4+\frac{\d t(1-\rho)}{\eta^2}\right)^2}-\frac{\pi}{N}\frac{1}{\left(2\left(1-\cos{\frac{\pi}{N+1}}\right)+\frac{\d t+(1-\rho)}{\eta^2}\right)^2}~.$$
The integrals can be computed explicitly:
\begin{align*}\frac{N^{-\nicefrac{3}{2}}h_N^2}{\left(\eta^2\right)^2\pi}\left(J+\tilde J\right)=\frac{N^{-\nicefrac{3}{2}}h_N^2}{\left(\eta^2\right)^2\pi}\left[\frac{\pi\left(2+\frac{\d t(1+\rho)}{\eta^2}\right)}{\left(\frac{\d t(1+\rho)}{\eta^2}\left(\frac{\d t(1+\rho)}{\eta^2}+4\right)\right)^{\nicefrac{3}{2}}}+\frac{\pi\left(2+\frac{\d t(1-\rho)}{\eta^2}\right)}{\left(\frac{\d t(1-\rho)}{\eta^2}\left(\frac{\d t(1-\rho)}{\eta^2}+4\right)\right)^{\nicefrac{3}{2}}}\right]\,.\end{align*}
Since $h_N\rightarrow h$, we can deduce from the preceding inequalities for both summands the convergence
\begin{equation}\sum_{j=1}^{2N}\left(\gamma_j^{2N}\right)^2\rightarrow \frac{h^2}{4\eta}\left(\frac{1}{(1+\rho)^{\nicefrac{3}{2}}}+\frac{1}{(1-\rho)^{\nicefrac{3}{2}}}\right)=2h^2I(\rho)\end{equation}
with the Fisher information\vspace*{1cm} \begin{equation}I(\rho)=\frac{1}{8\eta}\left(\frac{1}{(1+\rho)^{\nicefrac{3}{2}}}+\frac{1}{(1-\rho)^{\nicefrac{3}{2}}}\right)~.\end{equation}
We continue the proof with the generalization for different noise variances. 
If the noise variances are not equal $\eta_X^2\ne \eta_Y^2$, the covariance matrix can be written as
\begin{align*}\Sigma_{\theta}=\left(\begin{array}{cc} A_N & D_N\\ D_N & B_N\end{array}\right)\end{align*}
with the same diagonal matrix $D_N$ as before and two tridiagonal 1-Toeplitz matrices $A_N$ and $B_N$ with the same structure as before where $A_N$ has the entries $\d t+2\eta_X^2$ on the main diagonal and correspondingly, $B_N$ the entries $\d t +2\eta_Y^2$. The eigenvalues of $A_N$ and $B_N$ have been deduced before and are denoted by $\lambda_X^{(i,N)}$ and $\lambda_Y^{(i,N)}$ here, which emphasizes the dependence on $\eta_X$ and $\eta_Y$, respectively. Because of the special structure of $A_N$ and $B_N$, that are in particular symmetric and commutative, they share the same eingenvectors $v_i, i=1,\ldots,N$. We can calculate the $2N$ eigenvalues of $\Sigma_{\theta}$, denoted by $\xi_{+}^{(i)},\xi_{-}^{(i)},i=1,\ldots,N$, using the approach
\begin{align*}\Sigma_{\theta}=\left(\begin{array}{cc} A_N & D_N\\ D_N & B_N\end{array}\right)\cdot \left(\begin{array}{c} \alpha v_i\\ \beta v_i\end{array}\right)=\xi\left(\begin{array}{c} \alpha v_i\\ \beta v_i\end{array}\right) \end{align*}
for the eigenvectors with constants $\alpha$ and $\beta$.
This equation implies that
$$\alpha \,\lambda_X^{(i,N)}+\d t\,\theta\,\beta=\alpha \,\xi~,$$
$$\alpha \,\d t \,\theta+\,\beta\, \lambda_Y^{(i,N)}=\beta \,\xi~,$$
and by solving this system of equations we obtain the $2N$ eigenvalues

$$\xi_{+}^{(i)}=\frac{\lambda_X^{(i)}+\lambda_Y^{(i)}}{2}+\sqrt{\left(\frac{\lambda_X^{(i)}-\lambda_Y^{(i)}}{2}\right)^2+\theta^2\left(\d t\right)^2}~,$$
$$\xi_{-}^{(i)}=\frac{\lambda_X^{(i)}+\lambda_Y^{(i)}}{2}-\sqrt{\left(\frac{\lambda_X^{(i)}-\lambda_Y^{(i)}}{2}\right)^2+\theta^2\left(\d t\right)^2}~.$$
\noindent
We have dropped the index $N$ of the eigenvalues here.
\begin{lem}\label{evlem}If we assume $\eta_X>\eta_Y$, the following inequalities hold:
\begin{subequations}
\begin{equation}\label{ineq1}\frac{\lambda_X^{(i)}+\lambda_Y^{(i)}}{2}+\theta \d t<\xi_{+}^{(i)}<\lambda_X^{(i)}+\theta\d t~,\end{equation}
\begin{equation}\label{ineq2}\lambda_X^{(i)}-\theta\d t<\xi_{-}^{(i)}<\frac{\lambda_X^{(i)}+\lambda_Y^{(i)}}{2}-\theta \d t~.\end{equation}
\end{subequations}\end{lem}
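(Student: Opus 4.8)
The plan is to reduce all four inequalities to two elementary facts about the square root appearing in the explicit eigenvalues. First I would record that, writing $m_i\:=\frac{\lambda_X^{(i)}+\lambda_Y^{(i)}}{2}$, $a_i\:=\frac{\lambda_X^{(i)}-\lambda_Y^{(i)}}{2}$ and $b\:=\theta\d t$, the two eigenvalues computed above read
\begin{equation*}\xi_\pm^{(i)}=m_i\pm\sqrt{a_i^2+b^2}~,\end{equation*}
so that $\xi_+^{(i)}$ and $\xi_-^{(i)}$ are mirror images of one another about the common centre $m_i$. Since $\lambda_X^{(i)}=m_i+a_i$ and $\lambda_Y^{(i)}=m_i-a_i$, every bound in \eqref{ineq1}--\eqref{ineq2} becomes, after subtracting $m_i$, a comparison of $\pm\sqrt{a_i^2+b^2}$ with a linear expression in $a_i$ and $b$. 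The crucial sign input is $a_i>0$: this holds precisely because $\eta_X>\eta_Y$ and $0<\frac{i\pi}{N+1}<\pi$ for $i\in\{1,\ldots,N\}$, whence $a_i=(\eta_X^2-\eta_Y^2)\left(1-\cos\frac{i\pi}{N+1}\right)>0$. This is where the strict hypothesis $\eta_X>\eta_Y$ and the index range enter, and it is what makes all four inequalities strict.

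Second, I would treat the two \emph{inner} bounds — the lower bound for $\xi_+^{(i)}$ in \eqref{ineq1} and the upper bound for $\xi_-^{(i)}$ in \eqref{ineq2} — which both reduce to the single inequality $\sqrt{a_i^2+b^2}>b$. As $a_i>0$ one has $\sqrt{a_i^2+b^2}>\sqrt{b^2}=|b|\ge b$ with strict inequality, which settles both at once and requires no assumption on the sign of $\theta$.

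Third come the two \emph{outer} bounds — the upper bound for $\xi_+^{(i)}$ and the lower bound for $\xi_-^{(i)}$ — which are again mirror images of each other about $m_i$ and both reduce to $\sqrt{a_i^2+b^2}<a_i+\theta\d t$ (for $\xi_-^{(i)}$ this is the mirror of the $\xi_+^{(i)}$ statement, with $\lambda_X^{(i)}$ replaced by $\lambda_Y^{(i)}$). Here I would restrict to $\theta>0$; the case $\theta<0$ follows from the symmetry $\rho\mapsto-\rho$ obtained by flipping the sign of one Brownian motion, under which both the eigenvalues and the Fisher information are invariant. With $a_i>0$ and $b=\theta\d t>0$ both sides are positive, so squaring is legitimate and turns the claim into $a_i^2+b^2<a_i^2+2a_i b+b^2$, i.e.\ $0<2a_i\,\theta\d t$, which is true, and strict since $a_i>0$. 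The only genuine obstacle is this sign bookkeeping: the comparison $\sqrt{a_i^2+b^2}<a_i+\theta\d t$ is valid only once one knows $a_i+\theta\d t>0$ and that $\theta\d t$ enters with the right sign, so the reduction to positive correlation (equivalently, working with $|\theta\d t|$ throughout) must be made explicit before squaring; everything else is a one-line algebraic verification.

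Finally I would indicate how the lemma is exploited: bracketing $\xi_\pm^{(i)}$ between the equal-variance eigenvalues corresponding to $\eta=\sqrt{(\eta_X^2+\eta_Y^2)/2}$ and to $\eta=\eta_X$ lets one sandwich the Riemann sums $S_N,\tilde S_N$ computed in the equal-variance case, and passing to the limit then yields the two-sided estimate \eqref{fisherbounds} for $I(\rho)$, with the bound from $\eta_X$ giving the lower estimate and the bound from $\sqrt{(\eta_X^2+\eta_Y^2)/2}$ the upper one.
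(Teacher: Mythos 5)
Your argument is correct and is in substance the same as the paper's: the paper proves the upper bound for $\xi_+^{(i)}$ by adding the mixed term $\left(\lambda_X^{(i)}-\lambda_Y^{(i)}\right)\theta\d t$ under the root so as to complete the square $\left(\frac{\lambda_X^{(i)}-\lambda_Y^{(i)}}{2}+\theta\d t\right)^2$, which is exactly your comparison $\sqrt{a_i^2+b^2}<a_i+b$ after squaring, and it dismisses the inner bounds as ``obvious'', which is your observation $\sqrt{a_i^2+b^2}>|b|\ge b$. Two of your additions deserve emphasis. First, the sign bookkeeping you insist on is genuinely needed and is left implicit in the paper: for $\theta<0$ the outer bounds fail outright, since then $\xi_+^{(i)}\ge \frac{\lambda_X^{(i)}+\lambda_Y^{(i)}}{2}+a_i=\lambda_X^{(i)}>\lambda_X^{(i)}+\theta\d t$, so the lemma must be read for $\theta\ge 0$ (with strictness from $a_i>0$ and $\theta>0$), the opposite sign being recovered by the symmetry you invoke. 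Second, be aware that your reduction of the lower bound in \eqref{ineq2} to $\sqrt{a_i^2+b^2}<a_i+\theta\d t$ is the reduction of $\lambda_Y^{(i)}-\theta\d t<\xi_-^{(i)}$, not of the printed $\lambda_X^{(i)}-\theta\d t<\xi_-^{(i)}$; the printed bound cannot hold in general, because $\xi_-^{(i)}\le \lambda_Y^{(i)}$ always, while $\lambda_X^{(i)}-\lambda_Y^{(i)}=2a_i=2\left(\eta_X^2-\eta_Y^2\right)\left(1-\cos\frac{i\pi}{N+1}\right)$ stays of order one for $i$ comparable to $N$ whereas $\theta\d t=\mathcal{O}(1/N)$. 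The paper's own ``add the mixed term'' computation likewise produces $\frac{\lambda_X^{(i)}+\lambda_Y^{(i)}}{2}-\left(a_i+\theta\d t\right)=\lambda_Y^{(i)}-\theta\d t$, so you have in effect corrected a typographical slip in the statement rather than left a gap; you should simply make the replacement of $\lambda_X^{(i)}$ by $\lambda_Y^{(i)}$ in the left-hand side of \eqref{ineq2} explicit so that the inequality you prove and the inequality you cite agree. Your closing remark on how the bracketing feeds into the Riemann-sum computation and yields \eqref{fisherbounds} matches the paper's subsequent use of the lemma.
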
\enlargethispage*{2cm}
\begin{proof}
If $\eta_X>\eta_Y$ for the eigenvalues $\lambda_X^{(i)}>\lambda_Y^{(i)}$ holds for all $i \in \{1,\ldots,N\}$. Thus
$$\xi_{+}^{(i)}<\frac{\lambda_X^{(i)}+\lambda_Y^{(i)}}{2}+\sqrt{\left(\frac{\lambda_X^{(i)}-\lambda_Y^{(i)}}{2}\right)^2+\left(\lambda_X^{(i)}-\lambda_Y^{(i)}\right)\theta \d t+\theta^2\left(\d t\right)^2}=\lambda_X^{(i)}+\theta \d t$$
holds and analogously the lower bound for $\xi_{-}^{(i)}$ is obtained by adding the mixed term to the expression under the square root.
The other bounds are obvious.
\end{proof}
In the following we define
$$\gamma_{+}^{(i)}=\frac{\xi_{+}^{(i)}\left(\rho+N^{-\nicefrac{1}{4}}h_N\right)}{\xi_{+}^{(i)}\left(\rho\right)}-1>0~~\mbox{and}$$
$$\gamma_{-}^{(i)}=\frac{\xi_{-}^{(i)}\left(\rho+N^{-\nicefrac{1}{4}}h_N\right)}{\xi_{-}^{(i)}\left(\rho\right)}-1<0$$
in analogy to the case of equal noise variances. We use the preceding lemma to obtain bounds for these coefficients and show the LAN property with the same rate $N^{-\nicefrac{1}{4}}$ as above, including bounds for the Fisher information.
\begin{prop}\label{propineq}
If $\eta_X>\eta_Y$ the following inequalities hold:
\begin{subequations}
\begin{equation}\frac{N^{-\frac{1}{4}}h_N\d t+\frac{\lambda_Y^{(i)}-\lambda_X^{(i)}}{2}}{\lambda_X^{(i)}+\rho\d t}<\gamma_{+}^{(i)}<\frac{N^{-\frac{1}{4}}h_N\d t}{\frac{\lambda_X^{(i)}+\lambda_Y^{(i)}}{2}+\rho\d t}\end{equation}
and
\begin{equation}\hspace*{2.8cm}\frac{-N^{-\frac{1}{4}}h_N\d t}{\frac{\lambda_X^{(i)}+\lambda_Y^{(i)}}{2}-\rho\d t}<\gamma_{-}^{(i)}<\frac{-N^{-\frac{1}{4}}h_N\d t+\frac{\lambda_Y^{(i)}-\lambda_X^{(i)}}{2}}{\lambda_X^{(i)}-\rho\d t}~.\end{equation}\end{subequations}\end{prop}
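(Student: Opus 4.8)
The plan is to treat each coefficient as a quotient of an increment in $\theta$ over the value at $\theta=\rho$, namely
\[\gamma_{+}^{(i)}=\frac{\xi_{+}^{(i)}\!\left(\rho+N^{-\nicefrac{1}{4}}h_N\right)-\xi_{+}^{(i)}(\rho)}{\xi_{+}^{(i)}(\rho)}\quad\text{and}\quad\gamma_{-}^{(i)}=\frac{\xi_{-}^{(i)}\!\left(\rho+N^{-\nicefrac{1}{4}}h_N\right)-\xi_{-}^{(i)}(\rho)}{\xi_{-}^{(i)}(\rho)}\,,\]
and then to bound numerator and denominator separately. The crucial structural fact I would exploit is that $\lambda_X^{(i)}$ and $\lambda_Y^{(i)}$ do not depend on the correlation parameter, so the entire $\theta$-dependence of $\xi_{\pm}^{(i)}(\theta)=\frac{\lambda_X^{(i)}+\lambda_Y^{(i)}}{2}\pm\sqrt{B_i^2+\theta^2(\d t)^2}$, with $B_i\:=\frac{\lambda_X^{(i)}-\lambda_Y^{(i)}}{2}$, enters only through the square root. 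The denominators $\xi_{\pm}^{(i)}(\rho)$ will in every case be estimated by the one-sided bounds of Lemma \ref{evlem}, i.e.\ inequalities \eqref{ineq1} and \eqref{ineq2} read at $\theta=\rho$.

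For the numerators I would use two different devices according to which side of the inequality is wanted. The two ``outer'' estimates (the upper bound for $\gamma_{+}^{(i)}>0$ and the lower bound for $\gamma_{-}^{(i)}<0$) rely on the elementary fact that $u\mapsto\sqrt{B_i^2+u^2}$ is $1$-Lipschitz, so that
\[\big|\xi_{\pm}^{(i)}\!\left(\rho+N^{-\nicefrac{1}{4}}h_N\right)-\xi_{\pm}^{(i)}(\rho)\big|=\big|\sqrt{B_i^2+(\rho+N^{-\nicefrac{1}{4}}h_N)^2(\d t)^2}-\sqrt{B_i^2+\rho^2(\d t)^2}\big|\le N^{-\nicefrac{1}{4}}h_N\,\d t\,.\]
Dividing this increment by the lower bound $\xi_{+}^{(i)}(\rho)>\frac{\lambda_X^{(i)}+\lambda_Y^{(i)}}{2}+\rho\,\d t$ from \eqref{ineq1} yields at once the clean upper bound for $\gamma_{+}^{(i)}$, and the analogous manipulation built on \eqref{ineq2} is meant to deliver the clean lower bound for $\gamma_{-}^{(i)}$. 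The two ``inner'' estimates, which carry the correction term $\frac{\lambda_Y^{(i)}-\lambda_X^{(i)}}{2}$, I would instead obtain directly from the quotient $\xi_{\pm}^{(i)}(\rho+N^{-\nicefrac{1}{4}}h_N)/\xi_{\pm}^{(i)}(\rho)$: for $\gamma_{+}^{(i)}$ one bounds the numerator of this quotient below by $\frac{\lambda_X^{(i)}+\lambda_Y^{(i)}}{2}+(\rho+N^{-\nicefrac{1}{4}}h_N)\d t$ and the denominator above by $\lambda_X^{(i)}+\rho\,\d t$, both from \eqref{ineq1}, and subtracts $1$; the combination telescopes to exactly the stated numerator $N^{-\nicefrac{1}{4}}h_N\d t+\frac{\lambda_Y^{(i)}-\lambda_X^{(i)}}{2}$ over $\lambda_X^{(i)}+\rho\,\d t$.

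The treatment of $\gamma_{-}^{(i)}$ is the mirror image, using \eqref{ineq2} in place of \eqref{ineq1}. I expect the sign bookkeeping to be the genuine obstacle rather than any single estimate. Since $\gamma_{-}^{(i)}<0$ and the numerator increment $\xi_{-}^{(i)}(\rho+N^{-\nicefrac{1}{4}}h_N)-\xi_{-}^{(i)}(\rho)$ is negative, the monotonicity directions are reversed relative to the $\gamma_{+}^{(i)}$ case, and one must verify in each of the four inequalities, first, that the numerator increment has the sign one assumes, which forces the convention $|\rho+N^{-\nicefrac{1}{4}}h_N|>|\rho|$, and second, that dividing the one-sided numerator bound by the one-sided denominator bound preserves rather than reverses the inequality, i.e.\ that for each of the four cases the one-sided bound chosen for $\xi_{\pm}^{(i)}(\rho)$ points in the direction compatible with the sign of the numerator. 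This is also where I would double-check the orientation of the two-sided estimate \eqref{ineq2}, since the lower endpoint there must be the one strictly below $\min(\lambda_X^{(i)},\lambda_Y^{(i)})$ for the division to close. Once these sign checks are carried out, every inequality collapses to the one-line algebraic simplifications indicated above.
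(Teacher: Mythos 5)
Your treatment of $\gamma_{+}^{(i)}$ is correct and is essentially the paper's own argument: the paper gets your Lipschitz bound on the increment of the radical by completing a square under the root, namely $\sqrt{B_i^2+(\rho+N^{-\nicefrac{1}{4}}h_N)^2(\d t)^2}<\sqrt{B_i^2+\rho^2(\d t)^2}+N^{-\nicefrac{1}{4}}h_N\d t$ with $B_i=\frac{\lambda_X^{(i)}-\lambda_Y^{(i)}}{2}$, then divides by the lower endpoint of \eqref{ineq1}; and it obtains the inner (lower) bound from the quotient with the two one-sided estimates of \eqref{ineq1}, exactly as you describe. For the first display of the Proposition your proposal and the paper coincide.

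The gap is in the half you did not write out. You correctly sense that the $\gamma_{-}^{(i)}$ case is a ``mirror image'' only up to sign bookkeeping, and you even point at the right spot (the orientation of \eqref{ineq2}), but your closing claim that every inequality then collapses to a one-line simplification is not true there. Concretely: (i) your outer device gives $\gamma_{-}^{(i)}\ge -N^{-\nicefrac{1}{4}}h_N\d t/\xi_{-}^{(i)}(\rho)$, and to pass to the displayed denominator $\frac{\lambda_X^{(i)}+\lambda_Y^{(i)}}{2}-\rho\d t$ you would need $\xi_{-}^{(i)}(\rho)\ge\frac{\lambda_X^{(i)}+\lambda_Y^{(i)}}{2}-\rho\d t$, whereas \eqref{ineq2} asserts the strict reverse, so the division reverses rather than preserves the inequality; (ii) the inner bound requires the lower endpoint of \eqref{ineq2}, and your own criterion exposes the defect: $\lambda_X^{(i)}-\theta\d t$ is not below $\xi_{-}^{(i)}\le\lambda_Y^{(i)}$ once $\frac{\lambda_X^{(i)}-\lambda_Y^{(i)}}{2}>\theta\d t$, which holds for all but the smallest indices $i$; the bound that the proof of Lemma \ref{evlem} actually establishes is $\lambda_Y^{(i)}-\theta\d t<\xi_{-}^{(i)}$, and inserting it produces a different (for large $i$ positive, hence vacuous) upper bound. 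This is not repairable by more careful bookkeeping: for instance with $N=100$, $i=1$, $\eta_X^2=1$, $\eta_Y^2=\nicefrac{1}{4}$, $\rho=0.9$, $h_N=\nicefrac{1}{10}$ both displayed bounds for $\gamma_{-}^{(1)}$ fail numerically. The bound your method does deliver is $\gamma_{-}^{(i)}>-N^{-\nicefrac{1}{4}}h_N\d t/(\lambda_Y^{(i)}-\rho\d t)$, so the second display of the Proposition would have to be restated before the argument can close. (The paper itself dismisses this case with ``follows analogously'', so you have located a real weakness rather than missed an available argument, but as written your proof of the second display does not go through.)
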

\begin{proof}
Using the inequality \eqref{ineq1} in the preceding Lemma \ref{evlem} we obtain the lower bound for $\gamma_{+}^{(i)}$. From 
\begin{align*}&\gamma_{+}^{(i)}=\frac{\frac{\lambda_X^{(i)}+\lambda_Y^{(i)}}{2}+\sqrt{\left(\frac{\lambda_X^{(i)}-\lambda_Y^{(i)}}{2}\right)^2+\left(\rho+N^{-\frac{1}{4}}h_N\right)^2\left(\d t\right)^2}}{\xi_{+}^{(i)}(\rho)}-1\\
<&\frac{\frac{\lambda_X^{(i)}+\lambda_Y^{(i)}}{2}\hspace*{-0.05cm}+\hspace*{-0.07cm}\small \sqrt{\hspace*{-0.07cm}\left(\frac{\lambda_X^{(i)}-\lambda_Y^{(i)}}{2}\right)^2\hspace*{-0.125cm}+\hspace*{-0.035cm}\rho^2\hspace*{-0.01cm}\left(\d t\right)^2\hspace*{-0.06cm}+\hspace*{-0.015cm}N^{-\frac{1}{2}}h_N^2\hspace*{-0.015cm}\left(\d t\right)^2\hspace*{-0.075cm}+\hspace*{-0.025cm}2N^{-\frac{1}{4}}h_N\d t\sqrt{\hspace*{-0.015cm}\left(\frac{\lambda_X^{(i)}-\lambda_Y^{(i)}}{2}\right)^2\hspace*{-0.125cm}+\hspace*{-0.01cm}\rho^2\hspace*{-0.01cm}\left(\d t\right)^2}}}{\xi_{+}^{(i)}(\rho)}\normalsize-1\\
&=\frac{\frac{\lambda_X^{(i)}+\lambda_Y^{(i)}}{2}+\sqrt{\left(\frac{\lambda_X^{(i)}-\lambda_Y^{(i)}}{2}\right)^2+\left(\rho \d t\right)^2}+N^{-\frac{1}{4}}h_N\d t}{\xi_{+}^{(i)}(\rho)}-1\\
&=\frac{N^{-\frac{1}{4}}h_N\d t}{\xi_{+}^{(i)}(\rho)}<\frac{N^{-\frac{1}{4}}h_N\d t}{\frac{\lambda_X^{(i)}+\lambda_Y^{(i)}}{2}+\rho \d t}\end{align*}
we can deduce the upper bound using again the right-hand side of inequality \eqref{ineq1} in the last inequality. The bounds for $\gamma_{-}^{(i)}$ follow analogously.
\end{proof}
Now we are able to prove the LAN property in the same way as for the case of equal noise variances using the preceding inequalities.
Because of Proposition \ref{propineq}, the inequalities
$$\sum_{i=1}^N\left(\gamma_{+}^{(i)}\right)^2+\sum_{i=1}^N\left(\gamma_{-}^{(i)}\right)^2<\sum_{i=1}^N\left(\frac{N^{-\frac{1}{2}}h_N^2\left(\d t\right)^2}{\left(\frac{\lambda_X^{(i)}+\lambda_Y^{(i)}}{2}+\rho\d t\right)^2}+\frac{N^{-\frac{1}{2}}h_n^2\left(\d t\right)^2}{\left(\frac{\lambda_X^{(i)}+\lambda_Y^{(i)}}{2}-\rho\d t\right)^2}\right)$$
and
\begin{align*}\sum_{i=1}^N\left(\gamma_{+}^{(i)}\right)^2\hspace*{-0.1cm}+\hspace*{-0.1cm}\sum_{i=1}^N\left(\gamma_{-}^{(i)}\right)^2&>\sum_{i=1}^N\hspace*{-0.1cm}\left(\hspace*{-0.1cm}\frac{N^{-\frac{1}{2}}h_N^2\left(\d t\right)^2\hspace*{-0.05cm}+\hspace*{-0.05cm}\left(\frac{\lambda_Y^{(i)}-\lambda_X^{(i)}}{2}\right)^2}{\left(\lambda_X^{(i)}+\rho\d t\right)^2}\hspace*{-0.02cm}+\hspace*{-0.02cm}\frac{N^{-\frac{1}{2}}h_N^2\left(\d t\right)^2\hspace*{-0.05cm}+\hspace*{-0.05cm}\left(\frac{\lambda_Y^{(i)}-\lambda_X^{(i)}}{2}\right)^2}{\left(\lambda_X^{(i)}-\rho\d t\right)^2}\hspace*{-0.01cm}\right)\\ &>\sum_{i=1}^N\left(\frac{N^{-\frac{1}{2}}h_N^2\left(\d t\right)^2}{\left(\lambda_X^{(i)}+\rho\d t\right)^2}+\frac{N^{-\frac{1}{2}}h_N^2\left(\d t\right)^2}{\left(\lambda_X^{(i)}-\rho\d t\right)^2}\right)\end{align*}
hold. In the lower bound the mixed terms drop out.\\
Using those inequalities, the proof reduces to the method used before for the equal noise variance case where we found that (Riemann) sums of this type can be approximated by integrals. We just have to do this calculation twice for the upper and the lower bound changing only the constants in the denominator of the integrated function and obtain the convergence to $2h^2\underline{I(\rho)}$ and $2h^2\overline{I(\rho)}$, respectively, with the lower and upper bound for $I\left(\rho\right)$ stated in \eqref{fisherbounds}. 
\end{proof}
\begin{remark}Although the inequalities appearing in the proof for the case of different noise variances are strict, the asymptotic results do not yield the strict inequalities in \eqref{fisherbounds} for the lower and upper bound for the asymptotic Fisher information. We suppose that the strict inequalities also hold and a numerical approximation for the Riemann sums using different special values indicated this too. \end{remark}

\section{Simulation Results\label{sec:7}}
In this section we compare the simulation results for the subsampling and the multi-scale estimator. A detailed comparison of the finite sample size performance of the subsampling and the Hayashi-Yoshida estimator is given in \cite{palandri}. We have shown that the multi-scale estimator is asymptotically more efficient which means that the rate of convergence is $N^{\nicefrac{1}{4}}$ compared to $N^{\nicefrac{1}{6}}$ for the subsampling estimator. The following simulations investigate the behaviour of both estimators for finite sample sizes.\\
To generate asynchronous observation times for the processes $X$ and $Y$ we take $ t_i, i=1,\ldots,n$ and $ \tau_j,j=1,\ldots,m$ as arrival times of two independent Poisson processes such that $\d t_i \sim \text{Exp}(\vartheta_X)$ and $\d \tau_j\sim \text{Exp}(\vartheta_Y)$. In our notation this means $\E\left[\d t_i\right]=\vartheta_X$ and $\E\left[\d \tau_j\right]=\vartheta_Y$.
We set $T=1$ and the means of the time increments between observations equally to $\vartheta_X=\vartheta_Y=1/30000$.  The expected number of observations for both processes is about the number of seconds during one trading day and thus a typically high-frequency observation scheme.  The sets of observations $\mathcal{O}^X$ and $\mathcal{O}^Y$ almost surely have no intersection points, but all time increments are of order $1/N$ in probability (and thus all assumptions imposed in the sections before are guaranteed).\\
\begin{remark}In this special case, where the number of observations $n$ and $m$ follow independent Poisson distributions with parameters $1/\vartheta_X$ and $1/\vartheta_Y$, we can prove that our synchronization method creates $N$ synchronized observations with $\E N=1/\vartheta$
where
\begin{equation*}\vartheta=\vartheta_X+\vartheta_Y-\frac{\vartheta_X\vartheta_Y}{\vartheta_X+\vartheta_Y}~.\end{equation*}
For $\vartheta_X=\vartheta_Y(=1/30000)$ we obtain $\vartheta=(3/2)\vartheta_X$ and $\E N=(2/3)\vartheta_X^{-1}(=20000)$.
\end{remark}
For our simulations we use constant parameters $\sigma_X=\sigma_Y=1$ and $\rho \in [-1,1]$ and neglect drift terms. The increments of the efficient processes are then given by
$$\d \tilde X_{t_i}=\int_{t_{i-1}}^{t_i}dB_t^X$$
and
$$\d \tilde Y_{\tau_j}=\int_{\tau_{j-1}}^{\tau_j} dB_t^Y=\int_{\tau_{j-1}}^{\tau_j}\rho dB_t^X+\sqrt{1-\rho^2}\int_{\tau_{j-1}}^{\tau_j}dB_t$$
where $B_t$ is a standard Brownian motion independent of $\tilde X$. Therefore, we simulate values of $\tilde X$ for all observation times in $\mathcal{O}^X\cup\mathcal{O}^Y$ and simulate the observations of $\tilde Y$ using the equation above.
For the dicrete noise processes we assume $\epsilon_{t_i}^X\sim\mathcal{N}(0,\eta_X^2)$ and $\epsilon_{\tau_j}^Y\sim\mathcal{N}(0,\eta_Y^2)$.\\
To calculate the subsampling and the multi-scale estimators we first have to determine the number of subsamples $K_N$ and the number of frequencies $M_N$, respectively. We know that a choice $K_N=c_{sub}N^{\nicefrac{2}{3}}$ and $M_N=c_{multi}N^{\nicefrac{1}{2}}$, respectively, with constants $c_{sub}$ and $c_{multi}$, respectively, minimizes the resulting mean square errors of the estimators. For our simulations we can calculate the optimal constants because we know all the parameters. Inserting $K_N=c_{sub}N^{\nicefrac{2}{3}}$ and $M_N=c_{multi}N^{\nicefrac{1}{2}}$, respectively, in the asymptotic variances of the estimators, minimization yields
$$c_{sub,opt}=\sqrt[3]{3\eta_X^2\eta_Y^2}~,~c_{multi,opt}=\sqrt[4]{\frac{36\cdot 35}{52}\eta_x^2\eta_Y^2}~.$$

%\begin{table}[h!]
%\centering \large
%\begin{tabular}{|c|c|c|}\hline
%noise level $\eta_X^2=\eta_Y^2$ & $K_N$ & $M_N$\\
%\hline \vspace*{0.15cm}
%$(1/\sqrt{10})$ & 646 & 216\\
%0.1 & 300 & 122\\
%$(1/\sqrt{10})\cdot$ 0.1& 139 & 68\\
%0.01 & 65 & 38\\
%$(1/\sqrt{10})\cdot$ 0.01 & 30 & 22\\
%0.001 & 14 & 12\\
%$(1/\sqrt{10})\cdot$ 0.001 & 6 & 7\\
%0.0001 & 3 & 4\\
%\hline
%\end{tabular}\normalsize
%\caption{Calculated values for $K_N$ and $M_N$ for different noise levels $\eta_X^2=\eta_Y^2$ and parameters $\sigma^X=\sigma^Y=1$.}
%\end{table}

If one uses the estimators for data of two asset prices the parameters are unknown and one can calculate the optimal constants above in the same way, but they will depend on the variances of the noise processes and the quarticities of both underlying It\^{o} processes. Then one can estimate the constants by using adequate estimators for these parameters as proposed in \cite{zhang} for example.
%\enlargethispage*{4cm}\vspace*{-0.4cm}
%\begin{figure}[H]
%\begin{center}
%\includegraphics[width=12.5cm]{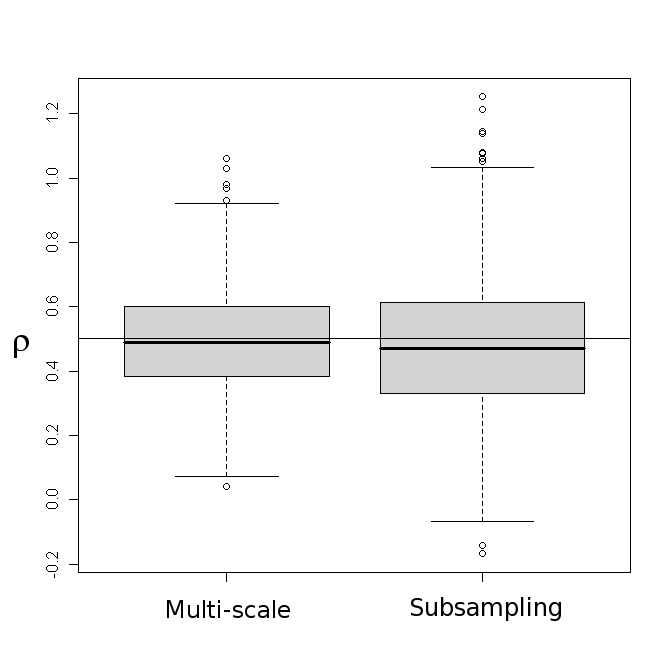}
%\caption{\label{bp}Boxplot: Multi-scale and subsampling estimator for $\rho=0.5$ when $\eta_X^2=\eta_Y^2=\eta^2=\sqrt{0.1}$.}
%\end{center}
%\end{figure}

\begin{figure}
\centering
\parbox{2.5in}{\begin{tabular}{|c|c|c|}\hline
noise level $\eta_X^2=\eta_Y^2$ & $K_N$ & $M_N$\\
\hline %\vspace*{0.15cm}
$(1/\sqrt{10})$ & 646 & 216\\
0.1 & 300 & 122\\
$(1/\sqrt{10})\cdot$ 0.1& 139 & 68\\
0.01 & 65 & 38\\
$(1/\sqrt{10})\cdot$ 0.01 & 30 & 22\\
0.001 & 14 & 12\\
$(1/\sqrt{10})\cdot$ 0.001 & 6 & 7\\
0.0001 & 3 & 4\\
\hline
\end{tabular}}%
\qquad
\begin{minipage}{2.75in}%
\includegraphics[width=6.5cm]{neuerbopl.jpeg}
\end{minipage}
\renewcommand{\figurename}{Table}
\caption{ Calculated values for $K_N$ and $M_N$ for different noise levels $\eta_X^2=\eta_Y^2$ and parameters $\sigma^X=\sigma^Y=1$.}
\renewcommand{\figurename}{Figure}
\caption{\label{bp}Boxplot for Multi-scale and subsampling estimator for $\rho=0.5$ when $\eta_X^2=\eta_Y^2=\eta^2=\sqrt{0.1}$.}
\end{figure}

%\newpage\noindent
Figure \ref{bp} shows a boxplot for 1000 Monte Carlo iterations for large noise variances $\eta_X^2=\eta_Y^2=\eta^2=\sqrt{0.1}$ that exemplifies a higher efficiency of our proposed multi-scale estimator compared to the subsampling estimator at least when microstructure noise effects are large.\\
Next we present a comparison of the resulting root mean square errors (RMSE) for different noise levels. The results are illustrated in Figure \ref{rmse}. The RMSEs are calculated for each noise level based on 1000 Monte Carlo iterations.
Our simulations show that for very noisy data (noise level $\eta_X^2=\eta_Y^2=\eta^2\ge 0.01$) and $\E N=30000$ expected observations for both processes, the multi-scale estimator has a significant smaller root mean square error compared to the subsampling estimator. The ratio of both RMSEs is increasing when the noise level decreases in the range $0.1\ge \eta^2$. For small noise levels and same (expected) sample sizes the multi-scale estimator also has a smaller RMSE but the ratio of the RMSEs gets close to 1 and fluctuates for different (small) noise levels.\\ Our simulations thus confirm that for not negligible market microstructure frictions our proposed multi-scale estimator for the quadratic covariation of two It\^{o} processes performs better than the subsampling (and of course the HY-estimator) not only asymptotically but also in the case of typical sample sizes (for high-frequent intraday stock data).\\  \\

\begin{figure}[t]
\begin{center}
\includegraphics[width=14.5cm]{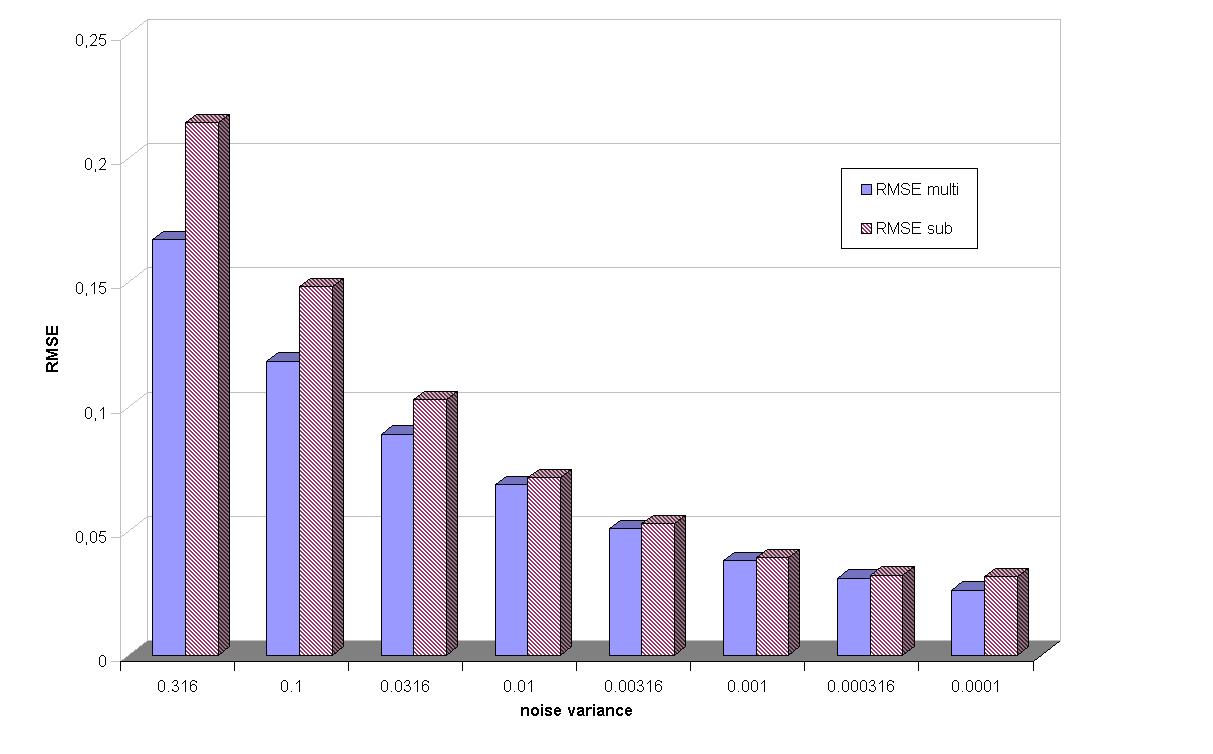}
\caption{\label{rmse}Root mean square errors of subsampling and multi-scale estimator for different noise levels $\eta_X^2=\eta_Y^2=\eta^2$ for $\rho=0.5$.}
\end{center}
\end{figure}

We have chosen the ranges for the noise variances such that for the illustrated values the noise variances decrease with a factor $1/\sqrt{10}$. Because of the factor $\eta_X^2 \eta_Y^2$ in the variances of the estimators due to noise the root mean square error (disregarding the discretization error) should decrease linearly. This can be seen regarding the values in Figure \ref{rmse} for large noise variances when the error due to noise dominates the error due to discretization whereas the influence of the discretization error is stronger for small noise levels.\\
\begin{figure}[t]
\begin{center}
\includegraphics[width=14.5cm]{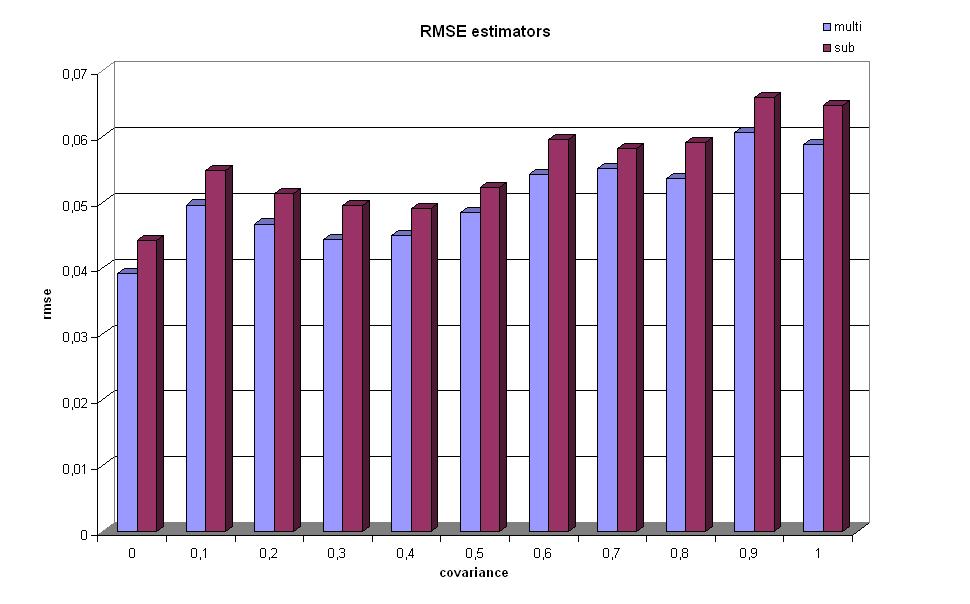}
\caption{\label{ratio}Root mean square errors of subsampling and multi-scale estimator for a constant noise level $\eta_X^2=\eta_Y^2=\eta^2=0.01$ and different correlations.}
\end{center}
\end{figure}

In Figure \ref{ratio} the root mean square errors of both estimators are diagrammed for different constant parameter values of the correlation  $\rho=k/10\,,k=0,\ldots,10$ when $\eta=0.01$ based on 200 Monte Carlo iterations for each value. For all eleven parameter values the multi-scale estimator has a smaller root mean square error although the differences are not very large for this (small) noise level. We can announce the increasing root mean square errors when $\rho$ increases with the dependence of the discretization error on $\rho$. Although we did not state precisely a formula for the asymptotic variance due to discretization it is natural that the variance analyzed in Section \ref{sec:5} grows for higher values of $\rho$. For this noise level the discretization error is influential enough to cause the different root mean square errors illustrated in Figure \ref{ratio}.

\newpage\noindent

\section{Conclusion\label{sec:9}}
We have presented and compared three estimators for the quadratic covariation of two It\^{o} processes. If we have discrete asynchronous observations without market microstructure noise, the Hayashi-Yoshida estimator is a consistent estimator solving the problem of asynchronicity. However, when microstructure frictions are relevant the estimator is not consistent any more, as we stated in Proposition \ref{propdiv}. If we deal with noisy asynchronous data, we have to synchronize the observations first. We used the method presented by \cite{palandri} in Section \ref{sec:3} to rearrange the observations in an adequate way. We have shown in Section \ref{sec:4} that a subsampling approach yields a consistent estimator with $N^{\nicefrac{1}{6}}$-rate of convergence, where $N$ denotes the number of synchronized observations. In Section \ref{sec:5} we introduced our multi-scale estimator which gains a higher efficiency and has a $N^{\nicefrac{1}{4}}$-rate of convergence which is stated in Theorem \ref{multitheo}. This rate is optimal what we have proved in a simplified model even for the synchronous case by giving a lower bound for the rate of convergence using the LAN property with rate $N^{-\nicefrac{1}{4}}$ in Section \ref{sec:6}. Proposition \ref{bound} comprises this result and the asymptotic Fisher information. Simulations show that the multi-scale estimator performs better compared to the subsampling estimator if the noise level is high enough and the sample size is not too small.
%\newpage\noindent
\nocite{bandi}
\nocite{poldi}
\bibliographystyle{chicago}
\bibliography{literatur}

%\printindex

\end{document}